\numberwithin{equation}{section}
\numberwithin{figure}{section}
\newcounter{dummy}
\numberwithin{dummy}{section}
  \theoremstyle{plain}
  \newtheorem*{lemma*}{Lemma}
  \theoremstyle{plain}
  \newtheorem*{corollary*}{Corollary}
  \theoremstyle{plain}
  \newtheorem{lemma}[dummy]{Lemma}
  \theoremstyle{plain}
\newtheorem{corollary}[dummy]{Corollary}
  \theoremstyle{plain}
  \newtheorem{theorem}[dummy]{Theorem}
  \newtheorem{proposition}[dummy]{Proposition}
  \theoremstyle{remark}
  \newtheorem{remark}[dummy]{Remark}
  \theoremstyle{definition}
\newtheorem{definition}[dummy]{Definition}
\newcommand{\Lip}{\textnormal{Lip}}
\newcommand{\Lie}{\textnormal{Lie}}
\newcommand{\diver}{\textnormal{div}}
\newcommand{\OO}{\mathcal{O}}
\newcommand{\PP}{\mathcal{P}}
\newcommand{\DD}{\mathcal{D}}
\newcommand{\EE}{\mathcal{E}}
\newcommand{\FF}{\mathcal{F}}
\newcommand{\R}{\mathbb R}
\newcommand{\Z}{\mathbb Z}
\newcommand{\X}{\mathbf X}
\newcommand{\Y}{\mathbf Y}
\newcommand{\x}{\mathbf x}
\newcommand{\ZZ}{\mathbf Z}
\newcommand{\Hol}[1]{#1\textnormal{-H{\"o}l}}
\newcommand{\pvar}{p\textnormal{-var}}
\newcommand{\onevar}{1\textnormal{-var}}
\newcommand{\aHol}{\alpha\textnormal{-H{\"o}l}}
\newcommand{\EEEover}[2]{\mathbb E^{#1} \left[ #2 \right]}
\newcommand{\PPPover}[2]{\mathbb P^{#1} \left[ #2 \right]}
\newcommand{\Diff}{\textnormal{Diff}}
\newcommand{\PPP}[1]{\mathbb{P} \left[ #1 \right]}
\newcommand{\spn}[1]{\textnormal{span}\left\{ #1 \right\}}
\newcommand{\norm}[1]{\|#1 \|}
\newcommand{\1}[1]{\mathbf 1 \{#1\}}
\newcommand{\g}{\mathfrak g}
\newcommand{\gen}[1]{\langle #1 \rangle}
\newcommand{\roof}[1]{\lceil #1 \rceil}
\newcommand\restr[2]{{\left.\kern-\nulldelimiterspace 
  #1 
  \vphantom{\big|} 
  \right|_{#2} 
  }}
\def\convd{\,{\buildrel \DD \over \rightarrow}\,}
\begin{document}

\title{A support and density theorem for Markovian rough paths}

\author{Ilya Chevyrev}
\address{I. Chevyrev,
Mathematical Institute,
University of Oxford,
Andrew Wiles Building,
Radcliffe Observatory Quarter,
Woodstock Road,
Oxford OX2 6GG,
United Kingdom}
\email{chevyrev@maths.ox.ac.uk}

\author{Marcel Ogrodnik}
\address{M. Ogrodnik,
Department of Mathematics,
Imperial College London,
Huxley Building,
180 Queens Gate,
London SW7 2AZ,
United Kingdom}
\email{marcel\_ogrodnik@hotmail.com}

\subjclass[2010]{Primary 60H10; Secondary 60G17}



\keywords{Markovian rough paths, support in H{\"o}lder topology, H{\"o}rmander's theorem}

\begin{abstract}
We establish two results concerning a class of geometric rough paths $\mathbf{X}$ which arise as Markov processes associated to uniformly subelliptic Dirichlet forms.
The first is a support theorem for $\mathbf{X}$ in $\alpha$-H{\"o}lder rough path topology for all $\alpha \in (0,1/2)$, which proves a conjecture of Friz--Victoir~\cite{FrizVictoir10}.
The second is a H{\"o}rmander-type theorem for the existence of a density of a rough differential equation driven by $\mathbf{X}$, the proof of which is based on analysis of (non-symmetric) Dirichlet forms on manifolds.
\end{abstract}

\maketitle

\section{Introduction}

Consider a symmetric Dirichlet form on $L^2(\R^d, \lambda)$
\begin{equation}\label{eq:DirichletRd}
\EE(f,g) = \int_{\R^d} \sum_{i,j=1}^d a^{i,j}(\partial_i f) (\partial_j g)d\lambda\;,
\end{equation}
where $\lambda$ is the Lebesgue measure and $a$ is a measurable, uniformly elliptic function taking values in the space of symmetric $d\times d$ matrices (we make our set-up precise in Section~\ref{subsec:Notation}).
It is well-known that there exists a symmetric Markov process $\mathbf{X}$ in $\R^d$ associated with $\EE$;
see~\cite{Fukushima11} for a general construction of $\X$ and~\cite{Stroock88} for fundamental analytic properties of $\EE$.

We are interested in differential equations of the form
\begin{equation}\label{eq:RDEIntro}
d\Y_t = V(\Y_t)d\X_t\;, \quad \Y_0 = y_0 \in \R^e\;,
\end{equation}
driven by $\X$ along vector fields $V = (V_1,\ldots,V_d)$ on $\R^e$. When $a$ is taken sufficiently smooth, the process $\mathbf{X}$ can be realised as a semi-martingale for which the classical framework of It{\^o} gives meaning to the equation~\eqref{eq:RDEIntro}. However for irregular functions $a$, this is no longer the case, and~\eqref{eq:RDEIntro} falls outside the scope of It{\^o} calculus.

One of the applications of Lyons' theory of rough paths~\cite{Lyons98} has been to give meaning to differential equations driven by processes outside the range of semi-martingales. One viewpoint of rough paths theory is that it factors the problem of solving equations of the type~\eqref{eq:RDEIntro} into first enhancing $\X$ to a rough path by appropriately defining its iterated integrals (which is typically done through stochastic means), and then solve~\eqref{eq:RDEIntro} deterministically.

Probabilistic methods to enhance the Markov process $\X$ to a rough path and the study of its fundamental properties appear in~\cite{LyonsStoica99, BassHamblyLyons02, Lejay06, Lejay08}, where primarily the forward-backward martingale decomposition is used to show existence of the stochastic area.
A somewhat different approach, which we follow here, is taken in~\cite{FrizVictoir08} where the authors define $\X$ directly as a diffusion on the free nilpotent Lie group $G^N(\R^d)$ (in particular the iterated integrals are given directly in the construction). One can show that in the situation mentioned at the start, the two methods give rise to equivalent definitions of rough paths.
The latter construction in fact yields further flexibility in that the evolution of $\X$ can depend in a non-trivial way on its higher levels (its iterated integrals).
Note that this is a common feature with L{\'e}vy rough paths studied in~\cite{FrizShekhar17, Chevyrev18}.
Markovian rough paths have also recently been investigated in~\cite{CassOgrodnik17, ChevyrevLyons16} in connection with the accumulated local $p$-variation functional and the moment problem for expected signatures.

The goal of this paper is to contribute two results to the study of Markovian rough paths in the sense of~\cite{FrizVictoir08}. Our first contribution (Theorem~\ref{thm:support_proper}) answers in the positive a conjecture about the support of $\X$ in $\alpha$-H{\"o}lder rough path topology. Such a support theorem appeared in~\cite{FrizVictoir08} for $\alpha \in (0,1/6)$, and was improved to $\alpha \in (0,1/4)$ in~\cite{FrizVictoir10} where it was conjectured to hold for $\alpha \in (0,1/2)$ in analogy to enhanced Brownian motion. Comparing our situation to the case of Gaussian rough paths, where such support theorems are known with sharp H{\"o}lder exponents (see e.g.,~\cite[Sec.~15.8]{FrizVictoir10}, and~\cite{FrizGess16} for recent improvements), the difficulty of course lies in the lack of a Gaussian structure, in particular the absence of a Cameron-Martin space.

Our solution to this problem relies almost entirely on elementary techniques.
Indeed, we first show that any stochastic process (taking values in a Polish space) admits explicit lower bounds on the probability of keeping a small $\alpha$-H{\"o}lder norm, provided that it satisfies lower and upper bounds on certain transition probabilities comparable to Brownian motion.
This is made precise by conditions~\ref{point:supUpper} and~\ref{point:ballLower} and Theorem~\ref{thm:smallHolder}.
We then verify these conditions for the translated rough path $T_h(\X)$ (which is in general non-Markov, see Remark~\ref{remark:nonMarkov}) for any $h \in W^{1,2}([0,T],\R^d)$ using heat kernel estimates of $\X$ (we also note that, just like for enhanced Brownian motion, all relevant constants depend on $h$ only through $\norm{h}_{W^{1,2}}$).

As usual, in combination with the continuity of the It{\^o}-Lyons map from rough paths theory, an immediate consequence of improving the H{\"o}lder exponent in the support theorem for $\X$
is a stronger Stroock-Varadhan support theorem (in $\alpha$-H{\"o}lder topology) for the solution $\Y$ to the rough differential equation (RDE)~\eqref{eq:RDEIntro} along with the lower regularity assumptions on the driving vector fields $V$ ($\Lip^2$ instead of $\Lip^4$).

Our second contribution (Theorem~\ref{thm:WHormander} and its Corollary~\ref{cor:HorCond}) may be seen as a non-Gaussian H{\"o}rmander-type theorem, and provides sufficient conditions on the driving vector fields $V = (V_1,\ldots, V_d)$ under which the solution to the RDE~\eqref{eq:RDEIntro} admits a density with respect to the Lebesgue measure on $\R^e$. Once again, while this result is reminiscent of density theorems for RDEs driven by Gaussian rough paths (e.g.,~\cite{BaudoinHairer07, CassFriz10, CassHairer15}), the primary difference in our setting is that methods from Malliavin calculus are no longer available due to the lack of a Gaussian structure.

We replace the use of Malliavin calculus by direct analysis of (non-symmetric) Dirichlet forms on manifolds. Indeed, we identify conditions under which the couple $(\X,\Y)$ admits a density on its natural state-space, and conclude by projecting to $\Y$. We note however that our current result gives no quantitative information about the density beyond its existence (not even for the couple $(\X,\Y)$), and we strongly suspect that the method can be improved to yield further information (particularly $L^p$ bounds and regularity results in the spirit of the De Giorgi--Nash--Moser theorem).

\subsection{Notation}\label{subsec:Notation}

Throughout the paper, we adopt the convention that the domain of a path $\x : [0,T] \to E$, for $T > 0$ and a set $E$, is extended to all of $[0,\infty)$ by setting $\x_t = \x_T$ for all $t > T$.
For a metric space $(E,d)$, $r \geq 0$, and $x \in E$, we denote the ball $B(x,r) = \{y \in E \mid d(x,y) \leq r\}$.

We let $G = G^N(\R^d)$ denote the step-$N$ free nilpotent Lie group over $\R^d$ for some $N \geq 2$,
and let $U_1,\ldots, U_d$ be a set of generators for its Lie algebra $\g = \g^N(\R^d)$, which we identify with the space of left-invariant vector fields on $G$. We equip $\R^d$ with the inner product for which $U_1,\ldots, U_d$ form an orthonormal basis upon canonically identifying $\R^d$ with a subspace of $\g$.

We equip $G$ with the corresponding Carnot--Carath{\'e}odory metric $d$. Let $1_G$ denote the identity element of $G$ and let $\lambda$ denote the Haar measure on $G$ normalised so that $\lambda(B(1_G,1)) = 1$.

For $\Lambda > 0$, let $\Xi(\Lambda) = \Xi^{N,d}(\Lambda)$ denote the set of measurable functions $a$ on $G$ which take values in the space of symmetric $d\times d$ matrices and which are sub-elliptic in the following sense:
\[
\Lambda^{-1} |\xi|^2 \leq \gen{\xi,a(x)\xi} \leq \Lambda|\xi|^2\;, \quad \forall \xi \in \R^d\;, \quad \forall x \in G\;.
\]
For $a \in \Xi(\Lambda)$, we define the associated Dirichlet form $\EE = \EE^a$ on $L^2(G,\lambda)$ for all $f,g \in C^\infty_c(G)$ by
\begin{equation}\label{eq:DirichletG}
\EE(f,g) = \int_G \sum_{i,j} a^{i,j} (U_i f) (U_j g) d\lambda\;.
\end{equation}
We let $\X = \X^{a,x}$ denote the Markov diffusion on $G$ associated to $\EE$ with starting point $\X_0 = x \in G$. We recall that the sample paths of $\X$ are a.s. geometric $\alpha$-H{\"o}lder rough paths for all $\alpha \in (0,1/2)$, and when $a(x)$ depends only on the level-$1$ projection $\pi_1(x) \in \R^d$ of $x \in G$, $\X$ serves as the natural rough path lift of the Markov diffusion associated to the Dirichlet form~\eqref{eq:DirichletRd} on $L^2(\R^d)$ discussed earlier. For further details, we refer to~\cite{FrizVictoir10}.

\begin{remark}
Throughout the paper we assume the symmetric Dirichlet form~\eqref{eq:DirichletG} is defined on the Hilbert space $L^2(G,\lambda)$ so that $\X$ is symmetric with respect to $\lambda$. As pointed out in~\cite{CassOgrodnik17}, it is natural to also consider $\EE$ defined over $L^2(G,\mu)$ for a measure $\mu(dx) = v(x)\lambda(dx)$, $v \geq 0$. While for simplicity we only work with $\EE$ defined on $L^2(G,\lambda)$, we note that appropriate assumptions of $v$ and a Girsanov transform (see, e.g.,~\cite{Fitzsimmons97}) can be used to relate the results of this paper to this more general setting.
\end{remark}

\section{Support theorem}

\subsection{Restricted H{\"o}lder norms}

We first record some deterministic results on H{\"o}lder norms which will be used in the sequel. Throughout this section, let $(E,d)$ be a metric space, $\alpha \in (0,1]$, $T > 0$, and $\x \in C([0,T],E)$ a continuous path. Let $\square$ denote any of the relations $<, \leq, =, \geq, >$, and consider the quantity
\[
\norm{\x}_{\aHol,\square\varepsilon;[s,t]} = \sup_{u,v \in [s,t], |u-v| \square \varepsilon} \frac{d(\x_u,\x_v)}{|u-v|^\alpha}\;,
\]
where we set $\norm{\x}_{\aHol,\square\varepsilon;[s,t]} = 0$ if the set $\{(u,v) \in [s,t]^2 \mid |u-v| \square \varepsilon\}$ is empty.

\begin{definition}\label{def:tau}
For $\varepsilon, \gamma > 0$ and $s \in [0,T]$, define the times $(\tau^{\varepsilon, \gamma,s}_n)_{n \geq 0} = (\tau_n)_{n \geq 0}$ by $\tau_0 = s$ and for $n \geq 1$
\[
\tau_n = \inf\{t > \tau_{n-1} \mid \norm{\x}_{\aHol,\geq \varepsilon;[\tau_{n-1},t]} \geq \gamma\}\;.
\]
We call any such $\tau_n$ a \emph{H{\"o}lder stopping time} of $\x$.
\end{definition}

\begin{lemma}\label{lem:unifHolBound}
Let $\varepsilon,\gamma >0$ and $s = 0$, and suppose that for some $c > 0$
\begin{equation}\label{eq:supCond}
\sup_{t \in [\tau_n,\tau_n + \varepsilon]} d(\x_{\tau_n}, \x_{t}) < c\;, \quad \forall n \geq 0\;.
\end{equation}
Then $\norm{\x}_{\aHol,=\varepsilon;[0,T]} < \tilde \gamma := (3c\varepsilon^{-\alpha}) \vee (4\gamma + c\varepsilon^{-\alpha})$.
\end{lemma}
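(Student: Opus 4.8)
The plan is to control $d(\x_u, \x_v)$ for an arbitrary pair $u, v \in [0,T]$ with $|u-v| = \varepsilon$ by locating $u$ (and hence $v$) relative to the Hölder stopping times $\tau_n = \tau_n^{\varepsilon,\gamma,0}$ defined in Definition~\ref{def:tau}. First I would observe that, by the very definition of the $\tau_n$, on each half-open interval $[\tau_{n-1}, \tau_n)$ one has $\norm{\x}_{\aHol, \geq \varepsilon; [\tau_{n-1}, t]} < \gamma$ for all $t < \tau_n$; passing to the closed interval (using continuity of $\x$), this yields the ``good'' bound $d(\x_p, \x_q) \leq \gamma |p - q|^\alpha$ whenever $\tau_{n-1} \leq p \leq q \leq \tau_n$ and $|p - q| \geq \varepsilon$. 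In particular, since any two consecutive stopping times satisfy $\tau_n - \tau_{n-1} \geq \varepsilon$ (the defining set is empty until the window has width at least $\varepsilon$), this applies to the endpoints and gives $d(\x_{\tau_{n-1}}, \x_{\tau_n}) \leq \gamma(\tau_n - \tau_{n-1})^\alpha$. The complementary ingredient is hypothesis~\eqref{eq:supCond}, which controls the oscillation of $\x$ on each window $[\tau_n, \tau_n + \varepsilon]$ by $c$; I would also note the trivial consequence that for any $t \in [\tau_n, \tau_{n+1}]$, writing $t$ as lying in some sub-window, one gets a bound on $d(\x_{\tau_n}, \x_t)$ that is at worst $c$ plus a $\gamma$-Hölder contribution on the part of $[\tau_n, t]$ of length $\geq \varepsilon$.

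The core of the argument is then a case analysis for a fixed pair $u < v$ with $v - u = \varepsilon$. If $u$ and $v$ lie in the same interval $[\tau_{n-1}, \tau_n]$, we are in the good regime and $d(\x_u, \x_v) \leq \gamma \varepsilon^\alpha$ directly. Otherwise the interval $(u,v)$ straddles at least one stopping time; since $\tau_n - \tau_{n-1} \geq \varepsilon = v-u$, it can contain at most one, say $\tau_n \in (u,v)$, with $\tau_{n-1} \leq u < \tau_n < v \leq \tau_{n+1}$. I would then bound $d(\x_u, \x_v) \leq d(\x_u, \x_{\tau_n}) + d(\x_{\tau_n}, \x_v)$. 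For the second term, $v \in [\tau_n, \tau_n + \varepsilon]$ so~\eqref{eq:supCond} gives $d(\x_{\tau_n}, \x_v) < c$. For the first term $d(\x_u, \x_{\tau_n})$ with $\tau_{n-1} \leq u < \tau_n$: if $\tau_n - u \geq \varepsilon$ the good bound on $[\tau_{n-1},\tau_n]$ applies and gives $\leq \gamma(\tau_n - u)^\alpha \leq \gamma\varepsilon^\alpha$ (as $\tau_n - u \le \varepsilon$); if $\tau_n - u < \varepsilon$ then $u \in (\tau_n - \varepsilon, \tau_n)$, and I would split off a point, e.g. pick $w$ with $\tau_{n-1}\le w \le u$ and $\tau_n - w \ge \varepsilon$ when possible, or simply estimate $d(\x_u,\x_{\tau_n}) \le d(\x_{\tau_{n-1}}, \x_u) + d(\x_{\tau_{n-1}}, \x_{\tau_n})$, bounding $d(\x_{\tau_{n-1}},\x_{\tau_n}) \le \gamma(\tau_n-\tau_{n-1})^\alpha$ and $d(\x_{\tau_{n-1}},\x_u)$ via~\eqref{eq:supCond} or the good bound as appropriate. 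Collecting the worst-case constants across the sub-cases, one arrives at $d(\x_u,\x_v) < 4\gamma + c\varepsilon^{-\alpha}$ in the straddling case (after dividing by $\varepsilon^\alpha$), while a degenerate case where several oscillation bounds stack gives at most $3c\varepsilon^{-\alpha}$; taking the maximum yields $\tilde\gamma$.

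The main obstacle I anticipate is the bookkeeping in the straddling case when $\tau_n - u < \varepsilon$ and simultaneously the window $[\tau_{n-1}, \tau_n]$ is short (close to $\varepsilon$), so that neither $[\tau_{n-1}, u]$ nor $[u, \tau_n]$ has length $\geq \varepsilon$ and the clean $\gamma$-Hölder bound is unavailable on either piece; there one must fall back entirely on the oscillation hypothesis~\eqref{eq:supCond} applied on overlapping $\varepsilon$-windows, which is exactly why the constant $3c\varepsilon^{-\alpha}$ appears as a separate branch of the maximum. Care is also needed to ensure the stopping times do not accumulate before $T$ — but this is not needed for the statement, since~\eqref{eq:supCond} is assumed for all $n \geq 0$ and we only ever use finitely many $\tau_n$ near any fixed pair $(u,v)$. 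Once the $|u-v| = \varepsilon$ case is settled uniformly in the pair, taking the supremum gives $\norm{\x}_{\aHol, =\varepsilon; [0,T]} < \tilde\gamma$ as claimed.
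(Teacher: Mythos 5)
Your overall architecture is sound and is essentially the paper's argument: every pair $u<v$ with $v-u=\varepsilon$ straddles at most one H{\"o}lder stopping time $\tau_n$ (since consecutive ones are at least $\varepsilon$ apart), the piece $d(\x_{\tau_n},\x_v)$ is handled by~\eqref{eq:supCond}, and the whole content of the lemma is the bound on $d(\x_u,\x_{\tau_n})$ for $u\in(\tau_n-\varepsilon,\tau_n)$. The paper packages the conclusion as a contradiction at the first time the $=\varepsilon$ norm reaches $\tilde\gamma$, whereas you bound every pair directly; that difference is cosmetic. You also correctly identify the degenerate case ($\tau_n-\tau_{n-1}=\varepsilon$, three stacked oscillation bounds) as the source of the $3c\varepsilon^{-\alpha}$ branch.

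The one step that does not survive as written is your fallback for $d(\x_u,\x_{\tau_n})$: the estimate $d(\x_u,\x_{\tau_n})\le d(\x_{\tau_{n-1}},\x_u)+d(\x_{\tau_{n-1}},\x_{\tau_n})$ with $d(\x_{\tau_{n-1}},\x_{\tau_n})\le\gamma(\tau_n-\tau_{n-1})^\alpha$ is useless when the window $[\tau_{n-1},\tau_n]$ is long, since $\tau_n-\tau_{n-1}$ is not bounded; and your alternative ``pick $w$ with $\tau_n-w\ge\varepsilon$'' leaves $d(\x_w,\x_u)$ uncontrolled unless $w$ is chosen correctly (for a generic such $w$ the gap $u-w$ is below $\varepsilon$, so neither the restricted H{\"o}lder bound nor~\eqref{eq:supCond} applies). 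The resolution is the dichotomy the paper uses: if $u\ge\tau_{n-1}+\varepsilon$, take $w=u-\varepsilon\in[\tau_{n-1},\tau_n]$, so that $d(\x_w,\x_u)\le\gamma\varepsilon^\alpha$ (gap exactly $\varepsilon$) and $d(\x_w,\x_{\tau_n})\le\gamma(2\varepsilon)^\alpha$ (gap in $[\varepsilon,2\varepsilon]$), giving $3\gamma\varepsilon^\alpha$; if instead $u<\tau_{n-1}+\varepsilon$, then automatically $\tau_n<u+\varepsilon<\tau_{n-1}+2\varepsilon$, so going through $\tau_{n-1}$ \emph{is} legitimate because $d(\x_{\tau_{n-1}},\x_{\tau_n})\le\gamma(2\varepsilon)^\alpha$ and $d(\x_{\tau_{n-1}},\x_u)<c$ by~\eqref{eq:supCond}. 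With that fix your case analysis closes and yields exactly the constant $\tilde\gamma=(3c\varepsilon^{-\alpha})\vee(4\gamma+c\varepsilon^{-\alpha})$.
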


\begin{proof}
For $n \geq 1$ and $t \in [\tau_n - \varepsilon, \tau_n]$, we have one of the following three mutually exclusive cases: (a) $\tau_n = \tau_{n-1}+\varepsilon$, (b) $\tau_n \in (\tau_{n-1}+\varepsilon, \tau_{n-1}+2\varepsilon]$ and $t \in [\tau_{n-1},\tau_{n-1}+\varepsilon]$, or (c) $t >  \tau_{n-1}+\varepsilon$. In case (a),~\eqref{eq:supCond} implies that $d(\x_t,\x_{\tau_n}) < 2c$. In case (b), $d(\x_{\tau_n},\x_{\tau_{n-1}}) \leq \gamma(2\varepsilon)^\alpha$ and~\eqref{eq:supCond} implies that $d(\x_{t},\x_{\tau_{n-1}}) < c$, so that
\[
d(\x_{t},\x_{\tau_n}) < c + \gamma(2\varepsilon)^\alpha \leq (2c) \vee (4\gamma \varepsilon^{\alpha})\;.
\]
In case (c), we have $d(\x_{t-\varepsilon},\x_t) \leq \gamma\varepsilon^\alpha$ and $d(\x_{t-\varepsilon},\x_{\tau_n}) \leq \gamma(2\varepsilon)^\alpha$, so that
\[
d(\x_t,\x_{\tau_n}) \leq \gamma\varepsilon^\alpha + \gamma(2\varepsilon)^\alpha \leq 3\gamma\varepsilon^\alpha\;.
\]
Hence, in all three cases
\begin{equation}\label{eq:ttaun}
d(\x_t,\x_{\tau_n}) < (2c)\vee (4\gamma\varepsilon^{\alpha})\;.
\end{equation}
Consider now
\[
\tau = \inf\{t > 0 \mid \norm{\x}_{\aHol;=\varepsilon;[0,T]} = \tilde \gamma\}\;.
\]
Note that $\norm{\x}_{\aHol;=\varepsilon;[0,T]} \geq \tilde \gamma \Leftrightarrow \tau < \infty$. Arguing by contradiction, suppose that $\tau < \infty$, which means that $d(\x_{\tau-\varepsilon},\x_\tau) = \tilde \gamma \varepsilon^\alpha$.
Consider the largest $n$ for which $\tau_n \leq \tau$. Observe that $\tau_n \in [\tau-\varepsilon,\tau]$, since otherwise $d(\x_{\tau-\varepsilon},\x_\tau) < \gamma \varepsilon^\alpha$, which is a contradiction since $\tilde \gamma > \gamma$. It follows from~\eqref{eq:supCond} that $d(\x_{\tau_n},\x_\tau) \leq c$, and therefore by~\eqref{eq:ttaun} and the triangle inequality
\[
d(\x_{\tau-\varepsilon},\x_\tau) < c + (2c)\vee (4\gamma\varepsilon^\alpha) = \tilde \gamma\varepsilon^\alpha\;,
\]
which is again a contradiction.
\end{proof}

\begin{lemma}\label{lem:dyadics}
Suppose that $\norm{\x}_{\aHol; =2^{-n}\varepsilon;[0,T]} \leq \gamma$ for every $n > N \in \Z$. Then
\[
\norm{\x}_{\aHol; < 2^{-N}\varepsilon;[0,T]} \leq \frac{\gamma}{1-2^{-\alpha}}\;.
\]
\end{lemma}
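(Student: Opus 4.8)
The plan is a dyadic chaining (telescoping) estimate. Fix $u < v$ in $[0,T]$ and set $h = v - u$; by the convention that $\norm{\x}_{\aHol,<2^{-N}\varepsilon;[0,T]}$ is a supremum over pairs with $0 < |u-v| < 2^{-N}\varepsilon$, we may assume $0 < h < 2^{-N}\varepsilon$. First I would let $m$ be the smallest integer with $2^{-m}\varepsilon \le h$, so that $2^{-m}\varepsilon \le h < 2^{-m+1}\varepsilon$ and, crucially, $m > N$ (this is exactly what $h < 2^{-N}\varepsilon$ buys). Writing the base-$2$ expansion of $h/\varepsilon$, there is an increasing sequence of integers $m = k_1 < k_2 < \cdots$, all strictly greater than $N$, with $h = \sum_{j\ge 1} 2^{-k_j}\varepsilon$. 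Set $s_0 = u$ and $s_j = u + \sum_{i=1}^{j} 2^{-k_i}\varepsilon$, so that $u = s_0 < s_1 < \cdots$, $s_j \uparrow v$, each $s_j \in [0,T]$, and $|s_j - s_{j-1}| = 2^{-k_j}\varepsilon$ with $k_j > N$.

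Then I would chain: for each finite $J$ the triangle inequality gives $d(\x_u,\x_{s_J}) \le \sum_{j=1}^{J} d(\x_{s_{j-1}},\x_{s_j})$, and since $s_J \to v$ and $\x$ is continuous, letting $J \to \infty$ yields $d(\x_u,\x_v) \le \sum_{j\ge 1} d(\x_{s_{j-1}},\x_{s_j})$ (when the expansion terminates this is a finite sum and there is nothing to pass to the limit). Each summand is bounded using the hypothesis with $n = k_j > N$, since the increment has length exactly $2^{-k_j}\varepsilon$: $d(\x_{s_{j-1}},\x_{s_j}) \le \gamma (2^{-k_j}\varepsilon)^\alpha$. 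Summing, and using $k_1 = m < k_2 < \cdots$,
\[
d(\x_u,\x_v) \le \gamma \varepsilon^\alpha \sum_{j\ge 1} 2^{-\alpha k_j} \le \gamma \varepsilon^\alpha \sum_{k\ge m} 2^{-\alpha k} = \frac{\gamma \varepsilon^\alpha 2^{-\alpha m}}{1-2^{-\alpha}} = \frac{\gamma\,(2^{-m}\varepsilon)^\alpha}{1-2^{-\alpha}}\;.
\]
Since $2^{-m}\varepsilon \le h = |u-v|$, the right-hand side is at most $\tfrac{\gamma}{1-2^{-\alpha}}\,|u-v|^\alpha$; dividing by $|u-v|^\alpha$ and taking the supremum over all admissible $u,v$ gives the claim.

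The argument is essentially bookkeeping, and the three points I would be careful about are: (i) that the leading index $m$ of the base-$2$ expansion of $h/\varepsilon$ really satisfies $m > N$, which is forced by $h < 2^{-N}\varepsilon$ and is what makes the hypothesis applicable to every increment $s_{j-1}\to s_j$; (ii) the passage from finite partial sums to the infinite sum, justified by continuity of $\x$ (and vacuous when the expansion is finite); and (iii) convergence of $\sum_{k\ge m} 2^{-\alpha k}$, which holds because $\alpha \in (0,1]$ makes $2^{-\alpha}\in(0,1)$. There is no substantive obstacle beyond keeping the dyadic combinatorics straight.
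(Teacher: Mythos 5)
Your proof is correct and follows essentially the same route as the paper's: a binary expansion of $(v-u)/\varepsilon$ starting at some $m>N$, telescoping along the dyadic partial sums, applying the hypothesis at each scale, and summing the geometric series. You simply spell out the chaining and the limit via continuity, which the paper leaves implicit.
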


\begin{proof}
Consider $(t-s)/\varepsilon \in (0,2^{-N})$ with binary representation $(t-s)/\varepsilon = \sum_{n = m}^\infty c_n 2^{-n}$ with $c_n \in \{0,1\}$, $m > N$, and $c_m = 1$. It follows that
\[
d(\x_s,\x_t) \leq \gamma \sum_{n=m}^\infty \varepsilon^\alpha c_n 2^{-n\alpha}\;.
\]
Since $2^{-m} \leq (t-s)/\varepsilon$, we have $\varepsilon^\alpha 2^{-n\alpha} \leq 2^{\alpha(m-n)}(t-s)^\alpha$. Hence
\[
d(\x_s,\x_t) \leq \gamma \sum_{n=m}^\infty 2^{\alpha(m-n)}(t-s)^\alpha = \frac{\gamma(t-s)^\alpha}{1-2^{-\alpha}}\;.
\]
\end{proof}

\begin{lemma}\label{lem:globalHol}
Suppose there exist $x \in E$ and $r > 0$ such that for all integers $k \geq 0$, $\x_{k\varepsilon} \in B(x,r)$ and $\norm{\x}_{\aHol; \leq \varepsilon; [k\varepsilon,(k+1)\varepsilon]} \leq \gamma$. Then
\[
\norm{\x}_{\aHol;[0,T]} \leq 2\gamma + 2r\varepsilon^{-\alpha}\;.
\]
\end{lemma}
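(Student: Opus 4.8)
The plan is to estimate the Hölder quotient $d(\x_s,\x_t)/|t-s|^\alpha$ directly for arbitrary $0 \leq s < t \leq T$, splitting into two regimes according to whether $|t-s| \leq \varepsilon$ or $|t-s| > \varepsilon$. Recall the standing convention that $\x$ is extended constantly past $T$, so $\x_{k\varepsilon}$ and the block estimates $\norm{\x}_{\aHol;\leq\varepsilon;[k\varepsilon,(k+1)\varepsilon]} \leq \gamma$ make sense for every integer $k \geq 0$, even when $k\varepsilon > T$ (for such blocks the restricted norm is simply $0$).

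First, suppose $|t-s| \leq \varepsilon$ and set $k = \lfloor s/\varepsilon \rfloor$, so that $s \in [k\varepsilon,(k+1)\varepsilon]$ and, since $t \leq s + \varepsilon$, necessarily $t \in [k\varepsilon,(k+2)\varepsilon]$. If $t \leq (k+1)\varepsilon$, then $s$ and $t$ lie in the common block $[k\varepsilon,(k+1)\varepsilon]$ and the hypothesis gives $d(\x_s,\x_t) \leq \gamma|t-s|^\alpha$ at once. Otherwise $s < (k+1)\varepsilon < t$, and I would insert the grid point $(k+1)\varepsilon$: writing $d(\x_s,\x_t) \leq d(\x_s,\x_{(k+1)\varepsilon}) + d(\x_{(k+1)\varepsilon},\x_t)$, the first term is controlled by the block bound on $[k\varepsilon,(k+1)\varepsilon]$ and the second by the block bound on $[(k+1)\varepsilon,(k+2)\varepsilon]$; crucially, each of the increments $(k+1)\varepsilon - s$ and $t - (k+1)\varepsilon$ is at most $t - s$ (and at most $\varepsilon$), so both terms are bounded by $\gamma(t-s)^\alpha$, yielding $d(\x_s,\x_t) \leq 2\gamma(t-s)^\alpha$.

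Second, suppose $|t-s| > \varepsilon$. Here I would route through the common centre $x$ via $d(\x_s,\x_t) \leq d(\x_s,x) + d(x,\x_t)$. To bound $d(\x_s,x)$, let $k = \lfloor s/\varepsilon \rfloor$ again; then $\x_{k\varepsilon} \in B(x,r)$ and $s,k\varepsilon$ lie in the same block at separation $s - k\varepsilon \leq \varepsilon$, so $d(\x_s,\x_{k\varepsilon}) \leq \gamma\varepsilon^\alpha$ and hence $d(\x_s,x) \leq r + \gamma\varepsilon^\alpha$; the same bound holds for $t$. Thus $d(\x_s,\x_t) \leq 2r + 2\gamma\varepsilon^\alpha$, and dividing by $|t-s|^\alpha > \varepsilon^\alpha$ gives the quotient bound $2r\varepsilon^{-\alpha} + 2\gamma$. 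Combining the two regimes yields $\norm{\x}_{\aHol;[0,T]} \leq 2\gamma + 2r\varepsilon^{-\alpha}$.

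This is an elementary covering-and-gluing argument, so I do not anticipate a genuine obstacle; the one point demanding care is the straddling case in the regime $|t-s| \leq \varepsilon$, where one must \emph{not} assume $s$ and $t$ lie in a common block — and it is precisely this split at a grid point that forces the factor $2$ in front of $\gamma$.
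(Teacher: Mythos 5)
Your proof is correct and follows essentially the same route as the paper: split into the regimes $|t-s|\leq\varepsilon$ and $|t-s|>\varepsilon$, insert a grid point in the straddling case, and for well-separated times pass through the common centre $x$ (the paper phrases this as a three-term triangle inequality with $d(\x_{k\varepsilon},\x_{n\varepsilon})\leq 2r$, which is the same estimate). The constants match, and your attention to the straddling case is exactly the point the paper's proof also handles.
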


\begin{proof}
Consider $0 \leq s < t \leq [0,T]$, and denote $s \in [k\varepsilon,(k+1)\varepsilon)$, $t \in [n\varepsilon,(n+1)\varepsilon)$. If $k=n$ there is nothing to prove, so suppose $k < n$. If $|t-s| \leq \varepsilon$, so that $n=k+1$, then
\[
d(\x_s,\x_t) \leq d(\x_s,\x_{n\varepsilon}) + d(\x_{n\varepsilon},\x_t) \leq \gamma 2^{1-\alpha}|t-s|^\alpha\;.
\]
Finally, if $|t-s| > \varepsilon$ then since $\x_{k\varepsilon},\x_{n\varepsilon} \in B(x,r)$, it follows that
\begin{align*}
|t-s|^{-\alpha}d(\x_s,\x_t)
&\leq |t-s|^{-\alpha}(d(\x_{k\varepsilon},\x_s) + d(\x_{k\varepsilon},\x_{n\varepsilon}) + d(\x_{n\varepsilon},\x_t)) \\
&\leq |t-s|^{-\alpha}(2\varepsilon^{\alpha}\gamma+ 2r)  \\
& \leq 2\gamma + 2r\varepsilon^{-\alpha}\;.
\end{align*}
\end{proof}

\subsection{Positive probability of small H{\"o}lder norm}

Suppose now $(E,d)$ is a Polish space. In this section, we give conditions under which an $E$-valued process has an explicit positive probability of keeping a small H{\"o}lder norm. We fix $\alpha \in (0,1/2)$, a terminal time $T > 0$, and an $E$-valued stochastic process $\X$ adapted to a filtration $(\FF_t)_{t \in [0,T]}$.

Consider the following conditions:
\begin{enumerate}[label={(\arabic*)}]
\item \label{point:supUpper} There exists $C_1 > 0$ such that for every $c,\varepsilon > 0$, and every H{\"o}lder stopping time $\tau$ of $\X$, a.s.
\[
\mathbb{P} \Big[\sup_{t \in [\tau,\tau+\varepsilon]} d(\X_\tau,\X_t) > c \mid \FF_\tau \Big] \leq C_1\exp\left( \frac{-c^2}{C_1\varepsilon} \right)\;.
\]

\item \label{point:ballLower} There exist $c_2, C_2 > 0$ and $x \in E$ such that for every $s \in [0,T]$ and $\varepsilon \in (0,T-s]$, a.s.
\[
\mathbb{P} \big[ \X_{s+\varepsilon} \in B(x,C_2\varepsilon^{1/2}) \mid \FF_s\big] \geq c_2 \1{\X_s \in B(x,C_2\varepsilon^{1/2})}\;.
\]
\end{enumerate}

Roughly speaking, the first condition states that the probability of large fluctuations of $\X$ over small time intervals should have the same Gaussian tails as that of a Brownian motion, while the second condition bounds from below the probability that $\X_{s+\varepsilon}$ is in a ball of radius $\sim\varepsilon^{1/2}$ given that $\X_s$ was in the same ball.

\begin{theorem}\label{thm:smallHolder}
Assume conditions~\ref{point:supUpper} and~\ref{point:ballLower}.
Fix $x$ as in~\ref{point:ballLower}.
Then there exist $C_{\ref{thm:smallHolder}}, c_{\ref{thm:smallHolder}} > 0$, depending only on $C_1,c_2, C_2, \alpha, T$, such that for every $\gamma > 0$, a.s.
\[
\PPP{\norm{\X}_{\aHol;[0,T]} < \gamma \mid \FF_0} \geq C_{\ref{thm:smallHolder}}^{-1} \exp\Big(\frac{-C_{\ref{thm:smallHolder}}}{\gamma^{2/(1-2\alpha)}}\Big)\1{\X_0 \in B(x,c_{\ref{thm:smallHolder}}\gamma^{1/(1-2\alpha)})}\;.
\]
\end{theorem}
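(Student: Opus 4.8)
The plan is to reduce the global estimate to a single scale. Fix a small constant $c_0=c_0(C_1,c_2,C_2,\alpha)$ to be chosen, set $M=\lceil T/(c_0\gamma^{2/(1-2\alpha)})\rceil$ and $\varepsilon=T/M$ (so $T/\varepsilon=M$ is an integer and, as long as $\gamma$ is not too large, $\varepsilon\asymp c_0\gamma^{2/(1-2\alpha)}$), put $r=C_2\varepsilon^{1/2}$, and take $x$ to be the point from~\ref{point:ballLower}. With this choice $r\varepsilon^{-\alpha}=C_2\varepsilon^{1/2-\alpha}\asymp\gamma$, so if $\bar\gamma$ is a suitably small fixed fraction of $\gamma$, then on the event
\[
E:=\bigcap_{k=0}^{M}\{\X_{k\varepsilon}\in B(x,r)\}\ \cap\ \bigcap_{k=0}^{M-1}\{\norm{\X}_{\aHol;\leq\varepsilon;[k\varepsilon,(k+1)\varepsilon]}\leq\bar\gamma\}
\]
Lemma~\ref{lem:globalHol} (applied to $\X$, which is constant past $T$) gives $\norm{\X}_{\aHol;[0,T]}\leq 2\bar\gamma+2r\varepsilon^{-\alpha}<\gamma$. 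Hence it suffices to bound $\PPP{E\mid\FF_0}$ from below by $C^{-1}\exp(-C\gamma^{-2/(1-2\alpha)})\1{\X_0\in B(x,r)}$, noting $r\geq c\gamma^{1/(1-2\alpha)}$ for an appropriate $c$, so that the indicator $\1{\X_0\in B(x,c\gamma^{1/(1-2\alpha)})}$ of the statement forces $\X_0\in B(x,r)$.

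The heart of the argument is a per-block estimate: for each $j$,
\[
\PPP{\norm{\X}_{\aHol;\leq\varepsilon;[j\varepsilon,(j+1)\varepsilon]}\leq\bar\gamma\,\mid\,\FF_{j\varepsilon}}\geq 1-c_2/2,
\]
\emph{uniformly in $\X_{j\varepsilon}$} and without using~\ref{point:ballLower}. On a block $I=[j\varepsilon,(j+1)\varepsilon]$ I would look at every dyadic sub-scale $2^{-n}\varepsilon$, $n\geq 0$. For each $n$, run the H{\"o}lder stopping times of Definition~\ref{def:tau} started at $j\varepsilon$, with scale $2^{-n}\varepsilon$ and threshold $\gamma_n^\ast$ a fixed fraction of $\bar\gamma$; there are at most $2^n$ of them inside $I$ since consecutive ones are at least $2^{-n}\varepsilon$ apart. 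If at every such stopping time $\tau$ one has $\sup_{t\in[\tau,\tau+2^{-n}\varepsilon]}d(\X_\tau,\X_t)<c_n$ with $c_n$ equal to a fixed fraction of $\bar\gamma\,(2^{-n}\varepsilon)^\alpha$, then Lemma~\ref{lem:unifHolBound} gives $\norm{\X}_{\aHol;=2^{-n}\varepsilon;I}\leq(1-2^{-\alpha})\bar\gamma$ at every scale, and summing over $n\geq 0$ via Lemma~\ref{lem:dyadics} yields $\norm{\X}_{\aHol;\leq\varepsilon;I}\leq\bar\gamma$. By condition~\ref{point:supUpper} and a union bound over the at most $2^n$ stopping times, the conditional probability of failure at scale $n$ is at most $2^nC_1\exp\big(-c_n^2/(C_1 2^{-n}\varepsilon)\big)$. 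The key computation is that $c_n^2/(2^{-n}\varepsilon)\asymp\bar\gamma^2\varepsilon^{2\alpha-1}2^{n(1-2\alpha)}$ and $\bar\gamma^2\varepsilon^{2\alpha-1}\asymp c_0^{2\alpha-1}$ is a constant \emph{independent of $\gamma$} — this is exactly what forces the scaling $\varepsilon\asymp\gamma^{2/(1-2\alpha)}$ — so, since $\alpha<1/2$, these probabilities are summable in $n$ and their sum is $<c_2/2$ once $c_0$ is chosen small enough.

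To assemble the blocks, write $A_j=\{\X_{j\varepsilon}\in B(x,r)\}$ and $B_j=\{\norm{\X}_{\aHol;\leq\varepsilon;[j\varepsilon,(j+1)\varepsilon]}\leq\bar\gamma\}$. Condition~\ref{point:ballLower} with increment $\varepsilon$ gives $\PPP{A_{j+1}\mid\FF_{j\varepsilon}}\geq c_2$ on $A_j$, while the per-block estimate gives $\PPP{B_j\mid\FF_{j\varepsilon}}\geq 1-c_2/2$; a union bound yields $\PPP{A_{j+1}\cap B_j\mid\FF_{j\varepsilon}}\geq c_2/2$ on $A_j$. Since $F_j:=\bigcap_{i\leq j}A_i\cap\bigcap_{i<j}B_i$ is $\FF_{j\varepsilon}$-measurable and contained in $A_j$, this gives $\PPP{F_{j+1}\mid\FF_{j\varepsilon}}\geq(c_2/2)\1{F_j}$, and telescoping over $j=0,\dots,M-1$ yields $\PPP{E\mid\FF_0}=\PPP{F_M\mid\FF_0}\geq(c_2/2)^M\1{\X_0\in B(x,r)}$. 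Because $M\asymp T/(c_0\gamma^{2/(1-2\alpha)})$, the factor $(c_2/2)^M$ is of the form $C^{-1}\exp(-C\gamma^{-2/(1-2\alpha)})$, which finishes the case $\gamma\leq\gamma_0$ for a threshold $\gamma_0=\gamma_0(C_1,c_2,C_2,\alpha,T)$ chosen so that $\varepsilon\asymp c_0\gamma^{2/(1-2\alpha)}$ genuinely holds. For $\gamma>\gamma_0$ one argues separately and more cheaply: take the single block $[0,T]$ (where $\norm{\X}_{\aHol;[0,T]}=\norm{\X}_{\aHol;\leq T;[0,T]}$), drop the ball condition altogether, and run the per-block argument with $\varepsilon=T$; there $\bar\gamma\asymp\gamma$ so $\bar\gamma^2T^{2\alpha-1}$ is large, the failure probability is $<1/2$, and $\PPP{\norm{\X}_{\aHol;[0,T]}<\gamma\mid\FF_0}\geq 1/2\geq C^{-1}\exp(-C\gamma^{-2/(1-2\alpha)})$.

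I expect the main obstacle to be the simultaneous bookkeeping of three scalings: the thresholds $\gamma_n^\ast$ and $c_n$ must be tuned so that Lemma~\ref{lem:unifHolBound} returns a fixed fraction of $\bar\gamma$ at \emph{every} dyadic scale, while keeping the Gaussian exponents $c_n^2/(2^{-n}\varepsilon)$ both summable in $n$ and — after the choice $\varepsilon\asymp\gamma^{2/(1-2\alpha)}$ — independent of $\gamma$; and the same $\varepsilon$ must make the Lemma~\ref{lem:globalHol} error $C_2\varepsilon^{1/2-\alpha}$ comparable to $\gamma$ while keeping the ball radius $r=C_2\varepsilon^{1/2}$ of order $\gamma^{1/(1-2\alpha)}$, which is precisely the radius appearing in the indicator of the statement. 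Once these constants are pinned down (and $c_0$ shrunk further if needed to cover the large-$\gamma$ regime and the global error term), everything else is the union bound and the telescoping conditional-expectation argument above, together with a routine check that condition~\ref{point:supUpper}, which holds a.s.\ for each H{\"o}lder stopping time, applies simultaneously to the countable family of stopping times arising in the construction.
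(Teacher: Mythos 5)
Your proposal is correct and follows essentially the same route as the paper: the same scaling $\varepsilon\asymp\gamma^{2/(1-2\alpha)}$, the same per-block estimate obtained from condition~\ref{point:supUpper} via H{\"o}lder stopping times, Lemma~\ref{lem:unifHolBound} and the dyadic summation of Lemma~\ref{lem:dyadics} (this is precisely Lemma~\ref{lem:lessHol}), the same use of Lemma~\ref{lem:globalHol} with $r=C_2\varepsilon^{1/2}$, and the same telescoping of conditional probabilities against condition~\ref{point:ballLower}. The only cosmetic differences are that you make $T/\varepsilon$ an integer and treat large $\gamma$ separately, whereas the paper simply takes $n=\lceil T/\varepsilon\rceil-1$ and absorbs everything into the constants.
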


\begin{lemma}\label{lem:lessHol}
Assume condition~\ref{point:supUpper}. Then there exists $C_{\ref{lem:lessHol}} > 0$, depending only on $C_1$ and $\alpha$, such that for all $0 \leq s < t \leq T$ and $\varepsilon \in (0,t-s]$, a.s.
\[
\PPP{\norm{\X}_{\aHol;\leq\varepsilon;[s,t]} \geq \gamma \mid \FF_s} \leq C_{\ref{lem:lessHol}} (t-s) \varepsilon^{-1}(\gamma^{-2}\varepsilon^{1-2\alpha}+1) \exp\Big(\frac{-\gamma^{2}(1-2^{-\alpha})^2}{9C_1\varepsilon^{1-2\alpha}}\Big)\;.
\]
\end{lemma}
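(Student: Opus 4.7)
My plan is a dyadic chaining argument combining Lemmas \ref{lem:dyadics} and \ref{lem:unifHolBound} with condition \ref{point:supUpper}. I set $\gamma_0 := \gamma(1-2^{-\alpha})$ and first apply Lemma \ref{lem:dyadics} with $N=-1$ on the interval $[s,t]$ (the lemma's proof extends verbatim to arbitrary intervals) to obtain the deterministic inclusion
\[
\{\norm{\X}_{\aHol;\le\varepsilon;[s,t]} \ge \gamma\} \subseteq \bigcup_{n \ge 0}\{\norm{\X}_{\aHol;=2^{-n}\varepsilon;[s,t]} \ge \gamma_0\}.
\]

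Next, for each $n \ge 0$ I write $\delta = 2^{-n}\varepsilon$ and apply Lemma \ref{lem:unifHolBound} at scale $\delta$ with Hölder-stopping-time parameter $\gamma^* := \gamma_0/6$ and fluctuation threshold $c := \gamma_0\delta^\alpha/3$; by inspection, $(3c\delta^{-\alpha}) \vee (4\gamma^* + c\delta^{-\alpha}) = \gamma_0$. The contrapositive of that lemma says the event $\{\norm{\X}_{\aHol;=\delta;[s,t]} \ge \gamma_0\}$ implies that some Hölder stopping time $\tau_k \le t$ satisfies $\sup_{u \in [\tau_k,\tau_k+\delta]} d(\X_{\tau_k},\X_u) \ge c$. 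Since each $\tau_k$ is an $(\FF_t)$-stopping time, condition \ref{point:supUpper} at $\tau_k$ together with the fact that consecutive $\tau_k$'s are $\ge \delta$ apart (so at most $2(t-s)2^n/\varepsilon$ of them lie in $[s,t]$) yields, after a union bound over $k$ and then over $n$,
\[
\PPP{\norm{\X}_{\aHol;\le\varepsilon;[s,t]} \ge \gamma \mid \FF_s} \le 2C_1(t-s)\varepsilon^{-1} \sum_{n\ge 0} 2^n \exp\bigl(-A\cdot 2^{n(1-2\alpha)}\bigr),
\]
with $A := \gamma^2(1-2^{-\alpha})^2/(9C_1\varepsilon^{1-2\alpha})$.

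The main obstacle is bounding this series in the stated form. For $A \ge 1$, I would factor $e^{-A\cdot 2^{n(1-2\alpha)}} = e^{-A}\cdot e^{-A(2^{n(1-2\alpha)}-1)}$ and use $A \ge 1$ to bound $A(2^{n(1-2\alpha)}-1) \ge 2^{n(1-2\alpha)}-1$; the super-exponential decay of $e^{-(2^{n(1-2\alpha)}-1)}$ beats $2^n$, so $\sum_{n \ge 1} 2^n e^{-(2^{n(1-2\alpha)}-1)}$ converges to a constant $K_\alpha$ and the full series is $\le K_\alpha e^{-A}$. For $A < 1$ the direct chaining bound is weaker than claimed (it scales like $A^{-1/(1-2\alpha)}$), but the claim's RHS is automatically $\ge 1$ in this regime: $\gamma^{-2}\varepsilon^{1-2\alpha} = (1-2^{-\alpha})^2/(9C_1 A) \ge (1-2^{-\alpha})^2/(9C_1)$ forces $(\gamma^{-2}\varepsilon^{1-2\alpha}+1)e^{-A} \ge D_\alpha > 0$ for a constant $D_\alpha$ depending only on $\alpha$ and $C_1$, so multiplying by $(t-s)\varepsilon^{-1} \ge 1$ and choosing $C_{\ref{lem:lessHol}} \ge 1/D_\alpha$ makes the RHS exceed $1$ and the claimed inequality reduces to the trivial bound $\PPP{\cdot} \le 1$. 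Setting $C_{\ref{lem:lessHol}} := \max(2C_1 K_\alpha, 1/D_\alpha)$ then works.
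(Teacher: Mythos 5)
Your proof is correct and follows essentially the same route as the paper's: the same dyadic reduction via Lemma~\ref{lem:dyadics}, the same application of Lemma~\ref{lem:unifHolBound} with $c=\gamma_0\delta^\alpha/3$ (identical to the paper's choice $c=2\gamma'\varepsilon^\alpha$ after rescaling $\gamma'=\gamma_0/6$), and the same union bound over the H{\"o}lder stopping times using condition~\ref{point:supUpper}. Your handling of the final series is in fact more careful than the paper's, which invokes the estimate $\sum_{n\ge 0}2^n e^{-K2^{\theta n}}\le C_4(K^{-1}+1)e^{-K}$ that actually fails as $K\to 0$ when $\theta=1-2\alpha<1$ (the left-hand side grows like $K^{-1/\theta}$); your observation that for $A<1$ the lemma's right-hand side already exceeds $1$ is precisely the fix needed to make the argument airtight.
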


\begin{proof}
Let $\tau_n = \tau_n^{\varepsilon,\gamma,s}$ be defined as in Definition~\ref{def:tau} with $\tau_0 = s$. Note that~\ref{point:supUpper} implies that for all $c,\gamma > 0$, $t > s$ and $\varepsilon \in (0,t-s]$,
\[
\PPP{\exists n \geq 0, \tau_n \leq t, \sup_{u \in [\tau_n,\tau_n+\varepsilon]} d(\X_{\tau_n}, \X_u) > c \mid \FF_s} \leq \roof{(t-s)/\varepsilon} C_1\exp\Big(\frac{-c^2}{C_1\varepsilon}\Big)\;,
\]
so that by Lemma~\ref{lem:unifHolBound}
\[
\PPP{\norm{\X}_{\aHol;=\varepsilon;[s,t]} \geq (3c\varepsilon^{-\alpha}) \vee (4\gamma + c\varepsilon^{-\alpha}) \mid \FF_s} \leq \roof{(t-s)/\varepsilon} C_1\exp\Big(\frac{-c^2}{C_1\varepsilon}\Big)\;.
\]
In particular, choosing $c = 2\gamma\varepsilon^{\alpha}$ yields that for all $\gamma > 0$, $t > s$, and $\varepsilon \in (0,t-s]$,
\[
\PPP{\norm{\X}_{\aHol;=\varepsilon;[s,t]} \geq 6\gamma \mid \FF_s} \leq \roof{(t-s)/\varepsilon} C_1\exp\Big(\frac{-(2\gamma)^2}{C_1\varepsilon^{1-2\alpha}}\Big)\;.
\]
Hence
\[
\PPP{\exists n \geq 0, \norm{\X}_{\aHol;=2^{-n}\varepsilon;[s,t]} \geq \gamma \mid \FF_s} \leq 2C_1(t-s)\varepsilon^{-1}\sum_{n=0}^\infty 2^n \exp\Big(\frac{-2^{n(1-2\alpha)}\gamma^2}{9C_1\varepsilon^{1-2\alpha}}\Big)\;.
\]
The conclusion now follows from Lemma~\ref{lem:dyadics} and the observation that for every $\theta > 0$ there exists $C_4$ such that for all $K > 0$
\[
\sum_{n=0}^\infty 2^n \exp\left(-K 2^{\theta n}\right) \leq C_4 (K^{-1}+1) e^{-K}
\]
(which can be seen, for example, by the integral test and the asymptotic behaviour of the incomplete gamma function $\Gamma(p,K)$).
\end{proof}

\begin{proof}[Proof of Theorem~\ref{thm:smallHolder}]
For $\gamma, \varepsilon > 0$ and $s \in [0,T]$, consider the event
\[
A_s = \{ \norm{\X}_{\aHol; \leq \varepsilon;[s,s+\varepsilon]} < \gamma, \X_{s+\varepsilon} \in B(x,C_2\varepsilon^{1/2})\}\;.
\]
Applying condition~\ref{point:ballLower} and Lemma~\ref{lem:lessHol} with $t = s+\varepsilon$, we see that for all $s \in [0,T]$, and $\varepsilon, \gamma > 0$
\[
\PPP{ A_s \mid \FF_s} \geq c_2\1{\X_s \in B(x,C_2\varepsilon^{1/2})} - C_{\ref{lem:lessHol}}(\gamma^{-2}\varepsilon^{1-2\alpha}+1) \exp\Big( \frac{-\gamma^{2}(1-2^{-\alpha})^2}{9C_1\varepsilon^{1-2\alpha}}\Big)\;.
\]
Observe also that Lemma~\ref{lem:globalHol} (with $r = C_2\varepsilon^{1/2}$) implies that for all $\varepsilon, \gamma > 0$
\[
\mathbb{P}\Big[\norm{\X}_{\aHol;[0,T]} < 2\gamma + 2C_2\varepsilon^{1/2-\alpha} \mid \FF_0\Big] \geq \mathbb{P}\Big[\bigcap_{k=0}^{\roof{T/\varepsilon}-1} A_{k\varepsilon} \mid \FF_0 \Big]\;.
\]
It remains to control the final probability on the RHS. We set $\varepsilon = c_1\gamma^{2/(1-2\alpha)}$ (so that $\varepsilon^{1/2-\alpha}\sim \gamma$), where $c_1 > 0$ is sufficiently small (and depends only on $C_1,c_2, C_2,C_{\ref{lem:lessHol}}$ and $\alpha$) such that
\[
\kappa := c_2 - C_{\ref{lem:lessHol}}(c_1^{1-2\alpha} + 1)\exp\Big(\frac{-(1-2^{-\alpha})^2}{36 C_1 c_1^{1-2\alpha}}\Big) > 0\;,
\]
so in particular for all $s \in [0,T]$ and $\gamma > 0$,
\[
\PPP{A_s \mid \FF_s} \geq \kappa \1{\X_s \in B(x,C_2\varepsilon^{1/2})}\;.
\]
Inductively applying conditional expectations, it follows that for all $n \geq 0$
\[
\mathbb{P}\Big[\bigcap_{k=0}^{n} A_{k\varepsilon} \mid \FF_0\Big] \geq \kappa^{n+1}\1{\X_0 \in B(x,C_2\varepsilon^{1/2})}\;.
\]
Taking $n = \roof{T/\varepsilon}-1$ yields the desired result.
\end{proof}

\subsection{Support theorem for Markovian rough paths}

We now turn to the support theorem for Markovian rough paths in $\alpha$-H{\"o}lder topology, which we state in Theorem~\ref{thm:support_proper} at the end of this section.

Recall the Sobolev path space $W^{1,2} = W^{1,2}([0,T], \R^d)$ and the translation operator $T_{h}(\x)$ defined for $\x \in C^{\pvar}([0,T],G)$, $1 \leq p < N+1$, and $h \in C^{\onevar}([0,T], \R^d)$ (see~\cite[Sec.~1.4.2,~9.4.6]{FrizVictoir10}).
Let us fix $\alpha \in (0,1/2)$ and $\Lambda > 0$.
Recall further Notation~\ref{subsec:Notation}, in particular the set $\Xi(\Lambda)$.

\begin{proposition}\label{prop:support}
Let $h \in W^{1,2}$.
There exists a constant $C_{\ref{prop:support}} > 0$, depending only on $\Lambda$, $\norm{h}_{W^{1,2}}$, $\alpha$, and $T$, such that for all $a \in \Xi(\Lambda)$, $x \in G$, and $\gamma > 0$
\[
\PPPover{a,x}{\norm{T_h(\X)}_{\aHol;[0,T]} < \gamma} \geq C_{\ref{prop:support}}^{-1} \exp \Big( \frac{-C_{\ref{prop:support}}}{\gamma^{2/(1-2\alpha)}}\Big)\;.
\]
\end{proposition}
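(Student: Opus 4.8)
The strategy is to apply Theorem~\ref{thm:smallHolder} to the process $\X = T_h(\X^{a,x})$ (abusing notation; here we take $\X$ in the theorem to be the translated path), with $E = G$ equipped with the Carnot--Carath\'eodory metric and the natural filtration. The bulk of the work is to verify that $T_h(\X^{a,x})$ satisfies conditions~\ref{point:supUpper} and~\ref{point:ballLower} with constants depending on $a$ only through $\Lambda$ and on $h$ only through $\norm{h}_{W^{1,2}}$; once this is done, Theorem~\ref{thm:smallHolder} immediately yields a bound of the claimed form with $x$ replaced by the point furnished by~\ref{point:ballLower}, and one then removes the indicator and the dependence on the starting point by a short additional argument (see below).

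\emph{Verifying~\ref{point:supUpper}.} First I would recall the Gaussian-type heat kernel and tail estimates for $\X^{a,x}$ available from Sobolev/Moser theory for uniformly subelliptic Dirichlet forms on $G$ (as in~\cite{FrizVictoir08, FrizVictoir10, Stroock88}): there is $C_1 = C_1(\Lambda) > 0$ with $\PPPover{a,x}{\sup_{t\le\varepsilon} d(x,\X^{a,x}_t) > c} \le C_1\exp(-c^2/(C_1\varepsilon))$, uniformly in $a \in \Xi(\Lambda)$, $x \in G$, $c,\varepsilon > 0$; by the Markov property this transfers to an estimate conditional on $\FF_\tau$ at any stopping time $\tau$. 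The translation by $h$ is controlled because $T_h$ shifts the path by the rough path lift of $h$, and for $h \in W^{1,2}$ one has, by Cauchy--Schwarz, $d\big((T_h\x)_s, (T_h\x)_t\big) \le d(\x_s,\x_t) + C\norm{h}_{W^{1,2}}|t-s|^{1/2}$ on intervals of length $\le \varepsilon$ (using the Carnot--Carath\'eodory geometry and the standard estimate on the signature of a bounded-variation path in terms of its length). Plugging $c \mapsto c - C\norm{h}_{W^{1,2}}\varepsilon^{1/2}$ into the unconditional tail bound for $\X^{a,x}$ and adjusting $C_1$ gives~\ref{point:supUpper} for $T_h(\X^{a,x})$ with $C_1$ depending only on $\Lambda$ and $\norm{h}_{W^{1,2}}$.

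\emph{Verifying~\ref{point:ballLower}.} Here I would use the matching \emph{lower} Gaussian bound on the transition density of $\X^{a,x}$: there are $c_2', C_2' > 0$, depending only on $\Lambda$, such that the transition kernel $p_\varepsilon(y,z)$ satisfies $p_\varepsilon(y,z) \ge c_2'\,\lambda(B(y,\varepsilon^{1/2}))^{-1}$ whenever $d(y,z) \le C_2'\varepsilon^{1/2}$ (again from the subelliptic Moser/Aronson theory, see the references above). Fix any point $x_0 \in G$ (say $x_0 = 1_G$, shifted by the lift of $h$ at a convenient time). For $s \in [0,T]$ and $\varepsilon \le T-s$, on the event $\{(T_h\X^{a,x})_s \in B(x_0, C_2\varepsilon^{1/2})\}$ one has $(\X^{a,x})_s$ in a ball of comparable radius around a fixed point, and integrating the lower density bound over the (fixed-volume) ball $B(x_0,C_2\varepsilon^{1/2})$, using $\lambda(B(y,\varepsilon^{1/2})) \asymp \varepsilon^{Q/2}$ with $Q$ the homogeneous dimension of $G$ (Haar measure on $G$ is Ahlfors regular in the CC metric), and accounting for the deterministic $W^{1,2}$-shift exactly as above, yields $\PPPover{a,x}{(T_h\X^{a,x})_{s+\varepsilon} \in B(x_0,C_2\varepsilon^{1/2}) \mid \FF_s} \ge c_2$ on that event, with $c_2, C_2$ depending only on $\Lambda$ and $\norm{h}_{W^{1,2}}$ (choosing $C_2$ a fixed multiple of $C_2'$). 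This is condition~\ref{point:ballLower}, with the point $x = x_0$.

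\emph{Conclusion and removing the indicator.} Theorem~\ref{thm:smallHolder} now gives $C, c_{\ref{thm:smallHolder}} > 0$ with $\PPPover{a,x}{\norm{T_h(\X)}_{\aHol;[0,T]} < \gamma \mid \FF_0} \ge C^{-1}\exp(-C\gamma^{-2/(1-2\alpha)})\1{\X_0 \in B(x_0,c_{\ref{thm:smallHolder}}\gamma^{1/(1-2\alpha)})}$, but we want a bound valid for \emph{all} starting points $x$. To remove the indicator, note that $(T_h\X^{a,x})_0$ equals $x$ (translation fixes the initial point), so the indicator concerns $\X^{a,x}_0 = x$, and for $x$ outside the small ball the bound is vacuous. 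The fix is the standard one: run the process for a fixed time $T_0 = T/2$ first, use the lower transition bound~\ref{point:ballLower}-style estimate (now over a macroscopic ball, so with an $x$-independent constant depending on $\diam$-type quantities and $\Lambda$, $T$) to put $\X$ into $B(x_0, c_{\ref{thm:smallHolder}}\gamma^{1/(1-2\alpha)})$ with probability at least $C^{-1}\exp(-C\gamma^{-2/(1-2\alpha)})$ --- here one uses that driving the Dirichlet-form diffusion to a small ball of radius $\delta$ costs at worst $\exp(-C/\delta^2)$-type probability, again by chaining the short-time lower bounds over $\asymp \delta^{-2}$ steps --- then apply the conditional estimate above on $[T_0,T]$ and the Markov property. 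Matching $\delta \asymp \gamma^{1/(1-2\alpha)}$ makes both factors of the same exponential order, giving the stated bound with a single constant $C_{\ref{prop:support}}$ depending only on $\Lambda$, $\norm{h}_{W^{1,2}}$, $\alpha$, $T$.

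\emph{Main obstacle.} The delicate point is the careful bookkeeping of how the deterministic translation $T_h$ interacts with the group/CC-metric structure: one must ensure the $W^{1,2}$-norm (and nothing finer about $h$) controls the distortion uniformly on short intervals, and that the translated process --- which is no longer Markov (cf.\ Remark~\ref{remark:nonMarkov}) --- still satisfies the \emph{conditional} bounds~\ref{point:supUpper},~\ref{point:ballLower} at Hölder stopping times. This follows from applying the strong Markov property to the underlying process $\X^{a,x}$ \emph{before} translating and then transferring the estimates through the (adapted, deterministic-shift) map $T_h$, but the uniformity of all constants in $a \in \Xi(\Lambda)$ and in the starting point rests squarely on the global (non-Gaussian) subelliptic heat kernel bounds of~\cite{Stroock88, FrizVictoir10}, which is where all the analytic content sits.
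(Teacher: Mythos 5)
Your overall strategy --- verify conditions~\ref{point:supUpper} and~\ref{point:ballLower} for $T_h(\X)$ and invoke Theorem~\ref{thm:smallHolder} --- is exactly the paper's, and your treatment of~\ref{point:supUpper} is essentially Lemma~\ref{lem:supBound} (modulo one imprecision: the pointwise bound $d(T_h(\x)_s,T_h(\x)_t)\leq d(\x_s,\x_t)+C\norm{h}_{W^{1,2}}|t-s|^{1/2}$ is false because of the cross-iterated integrals of $\x$ and $h$; you need the $p$-variation version $\norm{T_h(\x)}_{\pvar;[s,t]}\leq C(\norm{\x}_{\pvar;[s,t]}+\norm{h}_{\onevar;[s,t]})$ combined with the Fernique estimate~\eqref{eq:Fernique}, which is what the paper uses). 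The genuine gap is in your handling of~\ref{point:ballLower} and the indicator. You fix the reference point $x_0$ once and for all (near $1_G$) and then try to remove the indicator $\1{\X_0\in B(x_0,c\gamma^{1/(1-2\alpha)})}$ by steering the diffusion into that small ball over $[0,T/2]$. This cannot work: the proposition must hold uniformly over all starting points $x\in G$, and $G$ is non-compact, so the cost of reaching $B(x_0,\delta)$ from $x$ is of order $\exp(-Cd(x,x_0)^2)$ and is not $x$-independent; worse, the $\alpha$-H{\"o}lder norm on the steering interval $[0,T/2]$ is not controlled at all, and in fact the event $\{\norm{T_h(\X)}_{\aHol;[0,T]}<\gamma\}\cap\{\X_{T/2}\in B(x_0,\delta)\}$ forces $d(T_h(\X)_0,x_0)\lesssim\gamma T^{\alpha}+\delta$ and is empty for $x$ far from $x_0$. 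The correct (and simpler) route, taken in Lemma~\ref{lem:lowerBallBound}, is to prove~\ref{point:ballLower} for \emph{every} centre $x\in G$ with constants depending only on $\Lambda$ and $\norm{h}_{W^{1,2}}$, and then apply Theorem~\ref{thm:smallHolder} with the centre chosen to be the starting point $T_h(\X)_0=x$ itself; the indicator is then identically $1$ and nothing needs to be removed.

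A second, related slip: in verifying~\ref{point:ballLower} you argue that $T_h(\X)_s\in B(x_0,C_2\varepsilon^{1/2})$ places $\X_s$ ``in a ball of comparable radius around a fixed point''. It does not: the higher levels of $T_h(\X)_s$ involve cross-integrals of $\X$ and $h$ over all of $[0,s]$, which is precisely the failure of Markovianity discussed in Remark~\ref{remark:nonMarkov}. The way around this is to condition on $\FF_s$ (so that both $\X_s$ and $T_h(\X)_s$ are known), aim the \emph{increment} $\X_{s,s+\varepsilon}$ at an $\FF_s$-measurable target using the heat-kernel lower bound (the paper uses the geodesic midpoint $\Y_s^{-1}z(\Y_s,x)$, which also keeps the Gaussian factor uniform), and separately show that the discrepancy $d(\X_{s,s+\varepsilon},T_h(\X)_{s,s+\varepsilon})$ exceeds $C\varepsilon^{1/2}/4$ only on an event of Fernique-type probability. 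With these two corrections your argument becomes the paper's proof.
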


For the proof, let us fix $h \in W^{1,2}$ and a filtration $(\FF_t)_{t \in [0,T]}$ to which $\X$ (and thus $T_h(\X)$) is adapted (e.g, the natural filtration generated to $\X$).

\begin{remark}\label{remark:nonMarkov}
If $a(x)$ depends only on the first level $\pi_1(x)$ for all $x \in G$, then $T_h(\X)$ is a (non-symmetric, time-inhomogeneous) Markov process.
In general, however, $T_h(\X)$ is non-Markov.
The reason is that, for any fixed $t \in (0,T]$, the sigma-algebra $\sigma(\X_t)$ is not necessarily contained in $\sigma(T_h(\X)_t)$, i.e., information on whether $T_h(\X)_t \in A$ for Borel subsets $A \subset G$ does not yield full information about $\X_t$, which is necessary to determine the evolution of $\X$, and thus of $T_h(\X)$.
\end{remark}

Recall that the Fernique estimate~\cite[Cor.~16.12]{FrizVictoir10} implies that for every stopping time $\tau$ and $p > 2$, a.s.
\begin{equation}\label{eq:Fernique}
\PPP{\norm{\X}_{\pvar;[\tau,\tau+\varepsilon]} > c \mid \FF_\tau} \leq C_F\exp\Big( \frac{-c^2}{C_F \varepsilon} \Big),
\end{equation}
where $C_F$ depends only on $\Lambda$ and $p$.
We now prove two lemmas which demonstrate that the process $T_h(\X)$ satisfies conditions~\ref{point:supUpper} and~\ref{point:ballLower}.

\begin{lemma}\label{lem:supBound}
There exists a constant $C > 0$, depending only on $\Lambda$, such that for all $c, \varepsilon > 0$ satisfying
\begin{equation}\label{eq:epsUpper}
\varepsilon \leq \frac{c^2}{4\norm{h}^2_{W^{1,2}}},
\end{equation}
it holds that for every stopping time $\tau$, a.s.
\[
\mathbb{P}\Big[\sup_{t \in [\tau,\tau+\varepsilon]} d(T_h(\X)_\tau,T_h(\X)_t) > c \mid \FF_\tau \Big] \leq C\exp\Big(\frac{-c^2}{4C\varepsilon}\Big)\;.
\]
\end{lemma}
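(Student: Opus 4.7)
The plan is to reduce the statement to a tail bound for the $p$-variation of $\X$ alone, for which the Fernique estimate \eqref{eq:Fernique} is directly available, by exploiting the fact that the translation by an $h \in W^{1,2}$ contributes a deterministic, small correction over a short time interval.

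First, I would invoke the basic translation estimate for rough paths (see \cite[Sec.~9.4]{FrizVictoir10}): for any $p \in [1,N+1)$ there is a constant $C_{p,N} > 0$ such that for every $\X \in C^{\pvar}([0,T],G)$ and $h \in C^{\onevar}([0,T],\R^d)$,
\[
\|T_h(\X)\|_{\pvar;[s,t]} \leq C_{p,N}\bigl(\|\X\|_{\pvar;[s,t]} + \|h\|_{\onevar;[s,t]}\bigr).
\]
Fix any $p \in (2, N+1)$. Since $d(T_h(\X)_\tau, T_h(\X)_t) \leq \|T_h(\X)\|_{\pvar;[\tau,t]}$, and Cauchy--Schwarz gives $\|h\|_{\onevar;[\tau,\tau+\varepsilon]} \leq \|h\|_{W^{1,2}}\sqrt{\varepsilon}$, I obtain
\[
\sup_{t \in [\tau,\tau+\varepsilon]} d(T_h(\X)_\tau, T_h(\X)_t) \leq C_{p,N}\bigl(\|\X\|_{\pvar;[\tau,\tau+\varepsilon]} + \|h\|_{W^{1,2}}\sqrt{\varepsilon}\bigr).
\]

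Second, I would absorb the deterministic term into $c/2$ using the hypothesis \eqref{eq:epsUpper}. Replacing $c$ by $c/(2C_{p,N})$ in \eqref{eq:epsUpper} (which is how the constant $C$ in the statement will get adjusted), the assumption $\varepsilon \leq c^2/(4\|h\|_{W^{1,2}}^2)$ ensures $C_{p,N}\|h\|_{W^{1,2}}\sqrt{\varepsilon} \leq c/2$, so on the event that the supremum exceeds $c$ we must have $C_{p,N}\|\X\|_{\pvar;[\tau,\tau+\varepsilon]} > c/2$. Therefore
\[
\mathbb{P}\Bigl[\sup_{t \in [\tau,\tau+\varepsilon]} d(T_h(\X)_\tau,T_h(\X)_t) > c \,\Big|\, \FF_\tau\Bigr] \leq \mathbb{P}\Bigl[\|\X\|_{\pvar;[\tau,\tau+\varepsilon]} > \tfrac{c}{2C_{p,N}} \,\Big|\, \FF_\tau\Bigr].
\]

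Third, I would apply the Fernique estimate \eqref{eq:Fernique}, which gives
\[
\mathbb{P}\Bigl[\|\X\|_{\pvar;[\tau,\tau+\varepsilon]} > \tfrac{c}{2C_{p,N}} \,\Big|\, \FF_\tau\Bigr] \leq C_F \exp\Bigl(\frac{-c^2}{4C_{p,N}^2 C_F \varepsilon}\Bigr),
\]
where $C_F$ depends only on $\Lambda$ (and on the fixed $p$). Choosing $C = \max(C_F, C_{p,N}^2 C_F)$ (all depending only on $\Lambda$, since $p$ and $N$ are fixed), the right-hand side is bounded by $C\exp(-c^2/(4C\varepsilon))$, which yields the lemma.

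The only real subtlety is in matching the form of the Fernique estimate \eqref{eq:Fernique}, which is stated for stopping times, with the translation estimate, which is purely deterministic. Since \eqref{eq:Fernique} is conditional on $\FF_\tau$ for arbitrary stopping times and the translation bound is pathwise, no measurability issue arises and the bound passes through directly.
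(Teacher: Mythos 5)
Your proposal is correct and follows essentially the same route as the paper's proof: the Lipschitz estimate for the translation operator (\cite[Thm.~9.33]{FrizVictoir10}), the Cauchy--Schwarz bound $\norm{h}_{\onevar;[\tau,\tau+\varepsilon]} \leq \varepsilon^{1/2}\norm{h}_{W^{1,2}}$ combined with~\eqref{eq:epsUpper} to absorb the deterministic translation term, and then the Fernique estimate~\eqref{eq:Fernique} for $\norm{\X}_{\pvar}$. You in fact spell out the absorption step and the final constant bookkeeping more explicitly than the paper does, which leaves these to the reader.
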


\begin{proof}
Suppose $c,\varepsilon > 0$ satisfy~\eqref{eq:epsUpper}.
Using that $\norm{h}_{\onevar;[s,s+\varepsilon]} \leq \varepsilon^{1/2}\norm{h}_{W^{1,2};[s,s+\varepsilon]}$, we have $\norm{h}_{\onevar;[s,s+\varepsilon]} \leq c/2$.
Fix now any $2 < p < N+1$.
Observe that (see~\cite[Thm.~9.33]{FrizVictoir10})
\begin{align*}
\sup_{t \in [s,s+\varepsilon]}d(T_h(\X)_s,T_h(\X)_t) &\leq \norm{T_h(\X)}_{\pvar;[s,s+\varepsilon]} \\
&\leq C_1\left(\norm{\X}_{\pvar;[s,s+\varepsilon]} + \norm{h}_{\onevar;[s,s+\varepsilon]} \right)\;,
\end{align*}
from which the conclusion follows by the Fernique estimate~\eqref{eq:Fernique}.
\end{proof}

\begin{lemma}\label{lem:lowerBallBound}
For all $C \geq C_0(\Lambda, \norm{h}_{W^{1,2}}) > 0$, there exists $c = c(C, \Lambda, \norm{h}_{W^{1,2}}) > 0$ such that for all $x \in G$, $s \in [0,T]$, and $\varepsilon \in (0,T-s]$, a.s.
\[
\mathbb{P}\big[ T_h(\X)_{s+\varepsilon} \in  B(x, C\varepsilon^{1/2}) \mid \FF_s \big] \geq c \1{T_h(\X)_s \in B(x,C\varepsilon^{1/2})}\;.
\]
\end{lemma}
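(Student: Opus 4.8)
The plan is to reduce the statement about $T_h(\X)$ to an analogous statement about $\X$ itself, for which we have good heat kernel lower bounds, and then to absorb the translation by $h$ into the geometry using the uniform control $\norm{h}_{\onevar;[s,s+\varepsilon]} \leq \varepsilon^{1/2}\norm{h}_{W^{1,2}}$. First I would recall that the translation operator $T_h$ acts fibrewise in the sense that $T_h(\X)_t = \X_t \cdot \exp(\text{(correction terms built from increments of }h\text{ and }\X)$ — more precisely, using the Campbell--Baker--Hausdorff / Chen-type formula for $T_h$ on $G^N(\R^d)$ (see~\cite[Sec.~9.4.6]{FrizVictoir10}), one has $d(T_h(\X)_s, \X_s)$ and $d(T_h(\X)_{s+\varepsilon}, \X_{s+\varepsilon})$ controlled by a quantity that, on the event of a moderate-sized increment of $\X$ over $[s,s+\varepsilon]$, is $O(\varepsilon^{1/2})$. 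Thus, up to changing the constant $C$ by a fixed factor and restricting to a set of probability bounded below (via the Fernique estimate~\eqref{eq:Fernique}), membership of $T_h(\X)$ in $B(x,C\varepsilon^{1/2})$ is comparable to membership of $\X$ in a ball of comparable radius around a comparable centre.

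Second I would invoke the two-sided Gaussian heat kernel bounds for $\X$ associated to a uniformly subelliptic Dirichlet form on $G$: there exist constants $c',C'$ depending only on $\Lambda$ such that the transition density $p_t^a(y,z)$ satisfies
\[
c'^{-1} t^{-Q/2} \exp\Big(\frac{-C' d(y,z)^2}{t}\Big) \leq p_t^a(y,z) \leq c' t^{-Q/2}\exp\Big(\frac{-d(y,z)^2}{C' t}\Big),
\]
where $Q$ is the homogeneous dimension of $G$ and $d$ is the Carnot--Carath\'eodory metric; this is the content of the Aronson-type estimates in~\cite{Stroock88} adapted to $G$ (as used throughout~\cite{FrizVictoir08,FrizVictoir10}). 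The key consequence is a \emph{ball-to-ball} lower bound: for any $y \in B(z_0, K\varepsilon^{1/2})$ one has
\[
\PPPover{a,y}{\X_\varepsilon \in B(z_0, K\varepsilon^{1/2})} = \int_{B(z_0,K\varepsilon^{1/2})} p_\varepsilon^a(y,z)\, \lambda(dz) \geq c''
\]
for a constant $c'' = c''(\Lambda, K) > 0$, using that $d(y,z) \leq 2K\varepsilon^{1/2}$ on the region of integration, the lower Gaussian bound, and $\lambda(B(z_0,K\varepsilon^{1/2})) \asymp (K\varepsilon^{1/2})^Q$ by the Ahlfors-regularity of $\lambda$. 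By the Markov property of $\X$ (conditioning at time $s$), this gives the desired bound for $\X$ itself with $T_h$ removed.

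Finally I would assemble the pieces: on $\FF_s$, condition on $\{T_h(\X)_s \in B(x,C\varepsilon^{1/2})\}$, which forces $\X_s$ into a ball of radius $O(\varepsilon^{1/2})$ around a point at distance $O(\varepsilon^{1/2})$ from $x$ (using the CBH control of $d(\X_s, T_h(\X)_s)$ on a set of $\FF_s$-conditional probability one — note this step needs the translation correction at time $s$ to depend only on data measurable at time $s$, which it does); intersect with the Fernique event $\{\norm{\X}_{\pvar;[s,s+\varepsilon]} \leq M\varepsilon^{1/2}\}$ for a large fixed $M = M(\Lambda,\norm{h}_{W^{1,2}})$, which has conditional probability at least $1 - C_F e^{-M^2/C_F} \geq 1/2$ say, and on which $d(T_h(\X)_{s+\varepsilon}, \X_{s+\varepsilon}) \leq C_{\mathrm{CBH}}(M + \norm{h}_{W^{1,2}})\varepsilon^{1/2}$; then apply the ball-to-ball lower bound for $\X$ with $K$ chosen large enough (depending on $C$, $M$, $\norm{h}_{W^{1,2}}$) that $B(x,C\varepsilon^{1/2})$ contains the relevant image ball for $\X$. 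Multiplying the (at least $1/2$) Fernique probability by the ball-to-ball constant $c''$ gives $c = c(C,\Lambda,\norm{h}_{W^{1,2}}) > 0$, provided $C \geq C_0(\Lambda,\norm{h}_{W^{1,2}})$ is large enough for the geometric inclusions to be non-vacuous. The main obstacle I anticipate is the bookkeeping in the first step: making precise, with the correct uniformity in $\varepsilon$ and measurability in $s$, how the CBH expansion of $T_h$ turns "$T_h(\X)$ in a ball" into "$\X$ in a comparable ball" — in particular controlling the mixed terms in $T_h(\X)$ involving both increments of $h$ and iterated integrals of $\X$, and verifying that the time-$s$ correction is $\FF_s$-measurable so that the conditioning is legitimate. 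Everything downstream of that reduction is a routine application of Aronson bounds and the Markov property.
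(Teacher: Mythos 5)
Your toolkit (heat-kernel lower bound, Fernique estimate, translation estimates for $T_h$) is the right one, but the reduction in your first step contains a genuine error. You claim that $d(T_h(\X)_s,\X_s)$ and $d(T_h(\X)_{s+\varepsilon},\X_{s+\varepsilon})$ are $O(\varepsilon^{1/2})$ on a moderate-increment event over $[s,s+\varepsilon]$. This is false: the discrepancy between $T_h(\X)_t$ and $\X_t$ at a \emph{fixed time} accumulates contributions from all of $[0,t]$ --- already at level one, $\pi_1(T_h(\X)_s)-\pi_1(\X_s)=h_s-h_0$, which is only of order $s^{1/2}\norm{h}_{W^{1,2}}$, and the level-$\geq 2$ cross-integrals of $\X$ and $h$ over $[0,s]$ can be arbitrarily large. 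Consequently, conditioning on $T_h(\X)_s\in B(x,C\varepsilon^{1/2})$ does \emph{not} force $\X_s$ into an $O(\varepsilon^{1/2})$-ball around $x$, and $B(x,C\varepsilon^{1/2})$ need not be comparable to any ball for $\X_{s+\varepsilon}$. What is true, and what the paper uses, is the \emph{increment} statement: $T_h(\X)_{s,s+\varepsilon}$ depends only on $\X$ and $h$ restricted to $[s,s+\varepsilon]$, and $d(\X_{s,s+\varepsilon},T_h(\X)_{s,s+\varepsilon})\lesssim \varepsilon^{1/2}(1+R^{(N-1)/N})$ on the event $\{\norm{\X}_{\pvar;[s,s+\varepsilon]}\leq R\varepsilon^{1/2}\}$. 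By left-invariance of $d$, the target event $\{T_h(\X)_{s+\varepsilon}\in B(x,C\varepsilon^{1/2})\}$ then reduces to $\X_{s,s+\varepsilon}$ landing near the \emph{random, $\FF_s$-measurable} point $T_h(\X)_s^{-1}x$ (the paper actually aims at the geodesic midpoint between $T_h(\X)_s$ and $x$ to leave room for the error), which lies within $C\varepsilon^{1/2}$ of $1_G$ on the conditioning event; the heat-kernel lower bound is then applied to $\X$ started at $\X_s$, wherever that may be.

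A second problem is the final assembly: you cannot lower-bound the probability of the intersection of the Fernique event and the ball event by multiplying their probabilities, since they are not independent; the valid bound is the subtraction $\PPP{A\cap B\mid\FF_s}\geq \PPP{B\mid\FF_s}-\PPP{A^c\mid\FF_s}$, which is what the paper does in arriving at $c=C_1^{-1}C^Q e^{-C_1C^2}-C_Fe^{-R^2/C_F}$. For this difference to be positive one must take $R$ much larger than $C$, while still keeping the translation error below $C\varepsilon^{1/2}/4$; this is possible only because the error grows like $R^{(N-1)/N}$, sublinearly in $R$ (every cross term contains at least one factor of $h$, hence at most $N-1$ factors of $\X$). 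Your error bound $C_{\mathrm{CBH}}(M+\norm{h}_{W^{1,2}})\varepsilon^{1/2}$, linear in the cutoff $M$, is too lossy to close this balancing, and fixing $M$ so that the Fernique probability is merely $\geq 1/2$ does not suffice, since the ball-to-ball constant $c''$ may be far smaller than $1/2$.
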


\begin{proof}
We use the shorthand notation $\Y = T_h(\X)$. For every $x,y \in G$, consider a geodesic $\gamma^{y,x} : [0,1] \to G$ with $\gamma^{y,x}_{0} = y$ and $\gamma^{y,x}_{1} = x$ parametrised at unit speed. Let $z(y,x) := \gamma^{y,x}_{1/2}$ denote its midpoint. For any $x \in G$, observe that
\begin{align*}
d(\Y_{s+\varepsilon}, x) &\leq d(\Y_{s+\varepsilon}, z(\Y_s,x)) + d(z(\Y_s,x),x) \\
&\leq d(\Y_{s,s+\varepsilon},\X_{s,s+\varepsilon}) + d(\X_{s,s+\varepsilon},\Y_s^{-1}z(\Y_s,x)) + d(z(\Y_s,x),x)\;.
\end{align*}
If $\Y_s \in B(x,r)$, then evidently $d(z(\Y_s,x),x) \leq r/2$.
Moreover, since $G$ is a homogeneous group and due to our normalisation of $\lambda$, it holds that $\lambda(B(x,r)) = r^Q$ for all $r \geq 0$ and $x \in G$, where $Q \geq 1$ is the homogeneous dimension of $G$.
Recall also the lower bound on the heat kernel~\cite[Thm.~16.11]{FrizVictoir10}
\[
p(\varepsilon,x,y) \geq C_l^{-1} \varepsilon^{-Q/2} \exp\Big(\frac{-C_l d(x,y)^2}{\varepsilon}\Big)\;, \quad \forall x,y \in G\;, \quad \forall \varepsilon > 0\;,
\]
where $C_l > 0$ depends only on $\Lambda$.
It follows that there exists $C_1 > 0$, depending only on $\Lambda$, such that, for any $r,\varepsilon > 0$ and $y \in B(1_G,r/2)$,
\begin{align*}
\PPP{d(\X_{s,s+\varepsilon}, y) < r/4}
&\geq \lambda(B(y,r/4))C_l^{-1}\varepsilon^{-Q/2}\exp\Big(\frac{-C_l r^2}{\varepsilon}\Big)
\\
&\geq \frac{C_1^{-1}r^Q}{\varepsilon^{Q/2}}\exp\Big(\frac{-C_1 r^2}{\varepsilon}\Big)\;.
\end{align*}
Note that if $\Y_s \in B(x,r)$, then necessarily $\Y_s^{-1}z(\Y_s,x) \in B(1_G,r/2)$, so we obtain for all $x \in G$, $r,\varepsilon > 0$ and $s \in [0,T]$
\[
\PPP{d(\X_{s,s+\varepsilon}, \Y_s^{-1}z(\Y_s,x)) < r/4 \mid \FF_s}
\geq \frac{C_1^{-1}r^Q}{\varepsilon^{Q/2}}\exp\Big( \frac{-C_1 r^2}{\varepsilon} \Big) \1{\Y_s \in B(x,r)}\;.
\]
Finally, by standard rough paths estimates (using that $T_h(\X)_{s,t}$ is equal to $\X_{s,t}$ plus a combination of cross-integrals of $\X$ and $h$ over $[s,t]$) we have
\begin{align*}
d(\X_{s,s+\varepsilon},\Y_{s,s+\varepsilon})
&\leq C_2\max_{i \in \{1\ldots, N\}} \Big(\sum_{k=1}^i \norm{h}_{\onevar;[s,s+\varepsilon]}^k \norm{\X}_{\pvar;[s,s+\varepsilon]}^{i-k} \Big)^{1/i} \\
&\leq C_2\max_{i \in \{1\ldots, N\}} \Big(\sum_{k=1}^i \varepsilon^{k/2}\norm{h}_{W^{1,2};[s,s+\varepsilon]}^k \norm{\X}_{\pvar;[s,s+\varepsilon]}^{i-k} \Big)^{1/i}\;.
\end{align*}
Hence, if $\norm{\X}_{\pvar;[s,s+\varepsilon]} \leq R\varepsilon^{1/2}$, then for some $C_3 > 0$ depending only on $G$
\[
d(\X_{s,s+\varepsilon},\Y_{s,s+\varepsilon}) \leq C_3\varepsilon^{1/2}(\norm{h}_{W^{1,2};[s,s+\varepsilon]}^{1/N} + \norm{h}_{W^{1,2};[s,s+\varepsilon]})(1+R^{(N-1)/N})\;.
\]
We now let $r = C\varepsilon^{1/2}$. It follows that if $C$ and $R$ satisfy
\begin{equation}\label{eq:Clower}
C \geq 4C_3(\norm{h}_{W^{1,2};[s,s+\varepsilon]}^{1/N} + \norm{h}_{W^{1,2};[s,s+\varepsilon]})(1+R^{(N-1)/N})\;,
\end{equation}
then by the Fernique estimate~\eqref{eq:Fernique}, for any $2 < p < N+1$,
\begin{align*}
\mathbb{P}\big[ d(\X_{s,s+\varepsilon},\Y_{s,s+\varepsilon}) > C\varepsilon^{1/2}/4 \mid \FF_s \big] &\leq \mathbb{P}\big[\norm{\X}_{\pvar;[s,s+\varepsilon]} > R\varepsilon^{1/2}, \mid \FF_s\big] \\
&\leq C_F \exp\Big( \frac{-R^2}{C_F} \Big)\;.
\end{align*}
It follows that if $C$ and $R$ furthermore satisfy
\begin{equation}\label{eq:clower}
c := C_1^{-1} C^{Q}\exp\left( -C_1 C^2 \right) - C_F \exp\Big( \frac{-R^2}{C_F} \Big) > 0\;,
\end{equation}
then we obtain
\[
\mathbb{P} \big[ d(\Y_{s+\varepsilon},x) < C\varepsilon^{1/2} \mid \FF_s \big] \geq c \1{\Y_s \in B(x,C\varepsilon^{1/2})}\;.
\]
We now observe that due to the factor $R^{(N-1)/N}$ in~\eqref{eq:Clower} above, there exists $C_0 > 0$, depending only on $\norm{h}_{W^{1,2}}$ and $\Lambda$, such that for every $C \geq C_0$, we can find $R > 0$ for which~\eqref{eq:Clower} and~\eqref{eq:clower} are satisfied.
\end{proof}

\begin{proof}[Proof of Proposition~\ref{prop:support}]
By Theorem~\ref{thm:smallHolder}, it suffices to check that $T_h(\X)$ satisfies conditions~\ref{point:supUpper} and~\ref{point:ballLower} with constants $C_1,c_2, C_2$ only depending on $\Lambda$ and $\norm{h}_{W^{1,2}}$. However this follows directly from Lemmas~\ref{lem:supBound} and~\ref{lem:lowerBallBound}.
\end{proof}

\begin{theorem}\label{thm:support_proper}
Let $\gamma, R > 0$.
It holds that
\begin{equation}\label{eq:dHol_lower_bound}
\inf_{x \in G} \inf_{a\in\Xi(\Lambda)} \inf_{\|h\|_{W^{1,2}} \leq R}  \PPPover{a,x}{d_{\aHol;[0,T]}(\X,S_N(h)) < \gamma} > 0\;,
\end{equation}
where $d_{\aHol;[0,T]}$ denotes the (homogeneous) $\alpha$-H{\"o}lder metric and $S_N(h)$ is the level-$N$ lift of $h$.
In particular, the support of $\X^{a,x}$ in $\alpha$-H{\"o}lder topology is precisely the closure in $C^{\Hol{\alpha}}([0,T], G)$ of $\{ x S_N(h) \mid h \in W^{1,2} \}$.
\end{theorem}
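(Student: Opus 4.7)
The plan is to deduce the uniform lower bound~\eqref{eq:dHol_lower_bound} from Proposition~\ref{prop:support} applied with $-h$ in place of $h$, and then to use local Lipschitz continuity of the translation operator to pass from a small $\aHol$-norm of $T_{-h}(\X)$ to proximity of $\X$ and $xS_N(h)$ in $d_{\aHol;[0,T]}$. The support characterization then follows from~\eqref{eq:dHol_lower_bound} combined with a standard density argument for geometric rough paths. The main technical point will be extracting a Lipschitz estimate for $T_h$ that is quantitative in $\norm{h}_{W^{1,2}}$ (rather than just $\norm{h}_{\onevar}$) and uniform in $a$ and $x$; this is handled via $\norm{h}_{\onevar;[0,T]} \le T^{1/2}\norm{h}_{W^{1,2}}$ together with the standard rough path estimates in~\cite[Ch.~9]{FrizVictoir10}.

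First, I would fix $h \in W^{1,2}$ with $\norm{h}_{W^{1,2}} \le R$ and a small $\gamma' > 0$ to be chosen. Since $\norm{-h}_{W^{1,2}} = \norm{h}_{W^{1,2}} \le R$, Proposition~\ref{prop:support} applied to $-h$ gives
\[
\PPPover{a,x}{\norm{T_{-h}(\X)}_{\aHol;[0,T]} < \gamma'} \ge C^{-1}\exp\Big(\frac{-C}{\gamma'^{\,2/(1-2\alpha)}}\Big),
\]
with $C = C(\Lambda, R, \alpha, T)$ independent of $a \in \Xi(\Lambda)$, $x \in G$, and of the particular choice of $h$ in the prescribed $W^{1,2}$-ball.

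Second, on the event $\{\norm{T_{-h}(\X)}_{\aHol;[0,T]} < \gamma'\}$, set $\tilde\X := T_{-h}(\X)$, so that $\X = T_h(\tilde\X)$ by the group identity $T_h \circ T_{-h} = \Id$, and $\tilde\X_0 = x$. Writing $\mathbf{1}_x$ for the constant path at $x$, we have $T_h(\mathbf{1}_x) = x\, S_N(h)$, so by local Lipschitz continuity of $T_h$ on $\aHol$ rough paths with common starting point, there exists $C' = C'(R, \gamma', \alpha, T)$ with
\[
d_{\aHol;[0,T]}(\X, x S_N(h)) = d_{\aHol;[0,T]}\bigl(T_h(\tilde\X), T_h(\mathbf{1}_x)\bigr) \le C' \norm{\tilde\X}_{\aHol;[0,T]} < C'\gamma'.
\]
Choosing $\gamma' = \gamma'(\gamma, R, \alpha, T) > 0$ small enough that $C'\gamma' < \gamma$ yields~\eqref{eq:dHol_lower_bound}.

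Finally, for the support characterization, the inclusion $\overline{\{xS_N(h) \mid h \in W^{1,2}\}} \subset \supp(\X^{a,x})$ is immediate from~\eqref{eq:dHol_lower_bound}. For the reverse inclusion, $\X^{a,x}$ is a.s. a geometric $\Hol{\alpha'}$ rough path for any $\alpha' \in (\alpha, 1/2)$; interpolation in H\"older scales then shows that its dyadic piecewise-linear approximations $xS_N(h^{(n)})$ (with $h^{(n)}$ Lipschitz, hence in $W^{1,2}$) converge to $\X^{a,x}$ in $\aHol$ topology, giving $\X^{a,x} \in \overline{\{xS_N(h) \mid h \in W^{1,2}\}}$ almost surely.
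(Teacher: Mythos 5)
Your proposal is correct and follows essentially the same route as the paper: apply Proposition~\ref{prop:support} with $-h$ (noting $\|-h\|_{W^{1,2}}=\|h\|_{W^{1,2}}\le R$), use the (locally Lipschitz, or merely locally uniformly continuous) dependence of $T_h(\x)$ on $\x$ together with $T_hT_{-h}=\Id$ and $T_h(\mathbf 1_x)=xS_N(h)$ to convert smallness of $\|T_{-h}(\X)\|_{\aHol}$ into $d_{\aHol}(\X,xS_N(h))<\gamma$, and conclude the support statement from~\eqref{eq:dHol_lower_bound} plus the fact that $\X$ is a.s.\ a geometric $\aHol$ rough path. The only cosmetic difference is that you invoke a quantitative Lipschitz bound (whose constant $C'$ should be fixed by restricting, say, $\gamma'\le 1$ before choosing $\gamma'$ small) where the paper only needs uniform continuity on bounded sets~\cite[Cor.~9.35]{FrizVictoir10}.
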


\begin{proof}
By uniform continuity of the map $(\x,h) \mapsto T_h(\x)$ on bounded sets~\cite[Cor.~9.35]{FrizVictoir10}, and the fact that $T_hT_{-h}(\x) = \x$ and $T_h(0) = S_N(h)$, there exists $\delta = \delta(\gamma,R) > 0$ such that for all $h\in W^{1,2}$ with $\|h\|_{W^{1,2}} \leq R$
\[
\|T_{-h}(\x)\|_{\Hol{\alpha};[0,T]} < \delta(\gamma,R) \Rightarrow d_{\Hol{\alpha}}(\x,S_N(h)) < \gamma\;.
\]
The bound~\eqref{eq:dHol_lower_bound} then follows from Proposition~\ref{prop:support}.
As a consequence, we see that the support of $\X^{a,x}$ contains the closure of $\{ x S_N(h) \mid h \in W^{1,2} \}$.
The reverse inclusion follows from the fact that $\X^{a,x}$ is a.s. a geometric $\alpha$-H{\"o}lder rough path, and is therefore the limit in the $d_{\Hol{\alpha};[0,T]}$ metric of lifts of smooth paths.
\end{proof}

\begin{remark}
The main difference with the approach taken in~\cite[Thm.~50]{FrizVictoir08} and~\cite[Thm~16.33]{FrizVictoir10} to prove a bound of the form $\mathbb{P}[d_{\Hol{\alpha}}(\X,S_N(h)) <\gamma] > 0$ (with $\alpha \in [0,1/6)$ and $\alpha \in [0,1/4)$ respectively) is that we do not rely on a support theorem in the uniform topology.
As a consequence, our analysis is more delicate but does not lose any power at each step, which allows us to push to the sharp H{\"o}lder exponent range $\alpha \in [0, 1/2)$.

Note also that~\cite[Thm~16.39]{FrizVictoir10} and~\cite[Cor.~46]{FrizVictoir08} give this bound for $h\equiv 0$ with the sharp range $\alpha \in [0,1/2)$.
The proof therein relies crucially on lower and upper bounds on the probability that $\X$ stays in small balls, namely $\mathbb{P}^{a,x}[\|\X\|_{0;[0,t]} < \gamma] \asymp e^{-\lambda(\gamma)t\gamma^2}$ with $0<\lambda_{\min} \leq \lambda(\gamma) \leq \lambda_{\max} < \infty$, which yields a version of Lemma~\ref{lem:lessHol} for the untranslated process $\X^{a,x}$ conditioned to stay in a small ball around $x$.
This argument is rather sensitive to the fact that for each fixed $\gamma > 0$ the same quantity $\lambda(\gamma)$ appears in the lower and upper bounds;
this is not true for the translated process $T_h(\X)$, which is the reason for our different strategy.
\end{remark}

\section{Density theorem}

\subsection{Semi-Dirichlet forms associated with H{\"o}rmander vector fields}\label{subsec:semiDir}

In this subsection, let $\OO$ be a smooth manifold and $W = (W_1,\ldots, W_d)$ a collection of smooth vector fields on $\OO$. For $z \in \OO$, let $\Lie_{z}W$ denote the subspace of $T_{z}\OO$ spanned by the vector fields $(W_1,\ldots, W_d)$ and all their commutators at $z$. We say that $W$ satisfies H{\"o}rmander's condition on $\OO$ if $\Lie_{z}W = T_{z}\OO$ for every $z \in \OO$, in which case we call $W$ a collection of H{\"o}rmander vector fields.

Fix a collection $W = (W_1,\ldots, W_d)$ of H{\"o}rmander vector fields on $\OO$ and $U \subset \OO$ an open subset with compact closure. Consider a bounded measurable function $a$ on $U$ taking values in (not necessarily symmetric) $d\times d$ matrices such that for some $\Lambda \geq 1$
\begin{equation}\label{eq:aLower}
\Lambda^{-1} |\xi|^2 \leq \gen{\xi, a(z)\xi}\;, \quad \forall \xi \in \R^d\;, \quad \forall z \in U\;.
\end{equation}
Let $\mu$ be a smooth measure on $\OO$ and define the bilinear map
\begin{align*}
\EE &: C^\infty_c(U) \times C^\infty_c(U) \to \R \\
\EE &: (f,g) \mapsto -\sum_{i,j=1}^d \int_{U} a^{i,j}(z) (W_i f)(z)(W_j^*g)(z) \mu(dz)\;,
\end{align*}
where $W_j^* = -W_j - \diver_\mu W_j$ is the formal adjoint of $W_j$ with respect to $\mu$.
In the following lemma, the $L^p$ norm $\norm{\cdot}_{p}$ for $p \in [1,\infty]$ is assumed to be on $L^p(U,\mu)$.
For background concerning (non-symmetric, semi-)Dirichlet forms, we refer to~\cite{Oshima13}.

\begin{lemma}\label{lem:L2Density}
The bilinear form $\EE$ is closable in $L^2(U, \mu)$, lower bounded, and satisfies the sector condition.
Denote by $P_t$ the associated (strongly continuous) semi-group on $L^2(U,\mu)$. Suppose further that $P_t$ is sub-Markov (so that the closed extension of $\EE$ is a lower-bounded semi-Dirichlet form) and maps $C_b(U)$ into itself. Then there exists $\nu > 2$ and  $b>0$ such that for every $x \in U$ and $t>0$ there exists $p_t(x,\cdot) \in L^2(U,\mu)$ with $\norm{p_t(x,\cdot)}_{2} \leq bt^{-\nu/2}$ such that for all $f \in L^2(U,\mu)$
\[
P_t f(x) = \int_U p_t(x,y) f(y) \mu(dy)\;.
\]
\end{lemma}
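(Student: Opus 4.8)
The goal is to produce an $L^2$-kernel for $P_t$ with the stated on-diagonal-type bound $\norm{p_t(x,\cdot)}_2 \le b t^{-\nu/2}$. The natural route is the classical equivalence between ultracontractivity-type estimates and Nash/Sobolev inequalities, adapted to the non-symmetric semi-Dirichlet setting. First I would observe that, because $W = (W_1,\ldots,W_d)$ satisfies H\"ormander's condition on the manifold $\OO$ and $U$ has compact closure, the subelliptic estimates of Rothschild--Stein / H\"ormander hold uniformly on $U$: there is a subelliptic Sobolev inequality of the form $\norm{f}_{L^{2\nu/(\nu-2)}(U,\mu)}^2 \le C \big(\sum_i \norm{W_i f}_2^2 + \norm{f}_2^2\big)$ for all $f \in C^\infty_c(U)$, with $\nu > 2$ the effective (local) dimension associated to the Carnot--Carath\'eodory structure of $W$ on $\bar U$. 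This is the key analytic input that replaces the Euclidean Sobolev inequality; the constant $\nu$ here is the one named in the statement.

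\textbf{From the form inequality to a Nash inequality.} Next I would use the lower bound~\eqref{eq:aLower} together with the boundedness of $a$ to compare the quadratic form. The symmetric part $\EE^{\mathrm{sym}}(f,f) = \tfrac12(\EE(f,f)+\EE(f,f))$ — more precisely, one should be slightly careful and work with $\EE(f,f) + \diver_\mu$ terms, but after an integration by parts $\EE(f,f) = -\sum_{i,j}\int a^{i,j}(W_i f)(W_j^* f)\mu$ expands into $\sum_{i,j}\int a^{i,j}(W_i f)(W_j f)\mu$ plus a first-order term $\sum_{i,j}\int a^{i,j}(\diver_\mu W_j)(W_i f) f\,\mu$ which is controlled by $\tfrac12\sum_i\norm{W_i f}_2^2 + C\norm{f}_2^2$ using Cauchy--Schwarz and boundedness of $a$ and of $\diver_\mu W_j$ on $\bar U$. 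Hence there are constants such that $\EE(f,f) + C_0\norm{f}_2^2 \ge c_0 \sum_i \norm{W_i f}_2^2$. Combined with the subelliptic Sobolev inequality and H\"older interpolation between $L^1$, $L^2$ and $L^{2\nu/(\nu-2)}$, this yields a Nash inequality
\[
\norm{f}_2^{2+4/\nu} \le C_N \big(\EE(f,f) + C_0\norm{f}_2^2\big)\norm{f}_1^{4/\nu}\;, \qquad f \in C^\infty_c(U)\;.
\]

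\textbf{From Nash to the kernel bound.} Then I would run the standard Nash argument in the semi-Dirichlet form setting. Since $P_t$ is sub-Markov (hence a contraction on every $L^p$, $1\le p\le\infty$, by Riesz--Thorin once we know it on $L^1$ and $L^\infty$, which follows from sub-Markovianity and duality with the co-semigroup) and strongly continuous on $L^2$, for $f \ge 0$ in $L^1\cap L^2$ one sets $u(t) = \norm{P_t f}_2^2$; then $u'(t) = -2\EE(P_t f, P_t f) \le -2\EE(P_tf,P_tf)$ and $\norm{P_tf}_1 \le \norm{f}_1$ by contractivity, so Nash gives a differential inequality $u'(t) \le -c\, u(t)^{1+2/\nu}\norm{f}_1^{-4/\nu} + 2C_0 u(t)$; absorbing the lower-order $2C_0 u$ term into a factor $e^{2C_0 t}$ (or, since only local-in-$t$ behaviour near $0$ matters for the claimed $t^{-\nu/2}$ bound, restricting attention to $t \le 1$ and noting the bound for $t>1$ follows from the $L^2$-contractivity up to adjusting $b$) one integrates to obtain $\norm{P_t f}_2 \le b' t^{-\nu/4}\norm{f}_1$ for $t\in(0,1]$, i.e. $P_t : L^1 \to L^2$ boundedly with norm $\lesssim t^{-\nu/4}$. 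By duality (applied to the co-semigroup $\widehat P_t$, which is the semigroup of the adjoint form and is also sub-Markov since the sector condition and sub-Markovianity pass to the dual) one also gets $\widehat P_t : L^1 \to L^2$, equivalently $P_t : L^2 \to L^\infty$ with norm $\lesssim t^{-\nu/4}$. For fixed $x\in U$ the functional $f \mapsto P_t f(x)$ is therefore bounded on $L^2(U,\mu)$ with norm $\le b t^{-\nu/2}$ after composing $P_{t/2}:L^2\to L^\infty$ with $P_{t/2}:L^2\to L^2$ and evaluating — using that $P_t$ maps $C_b(U)$ into itself (so pointwise evaluation of $P_t f$ is meaningful for, say, $f \in C_b \cap L^2$ and extends by density and the $L^\infty$ bound). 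Riesz representation then produces $p_t(x,\cdot)\in L^2(U,\mu)$ with $\norm{p_t(x,\cdot)}_2 \le b t^{-\nu/2}$ representing this functional, which is the assertion.

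\textbf{Main obstacle.} The routine parts (integration by parts, interpolation, the ODE comparison) are standard; the genuine content — and the step I expect to be most delicate — is twofold. First, establishing the \emph{uniform} subelliptic Sobolev inequality on $U$ with a single exponent $\nu$: one must invoke the local nature of H\"ormander's condition on $\bar U$ (compactness gives a uniform step, hence a uniform $\nu$ and uniform constants), and be careful that $\mu$ is a smooth measure so the volume-doubling / Sobolev machinery of Nagel--Stein--Wainger applies on $\bar U$. Second, the non-symmetry: one must verify that the dual semigroup $\widehat P_t$ is again sub-Markov so that the $L^1\to L^2$ bound can be dualized — this uses the sector condition together with the fact that the adjoint of a semi-Dirichlet form is a semi-Dirichlet form, and it is here that the hypothesis "$P_t$ is sub-Markov" (rather than merely the form being lower bounded and sectorial) is essential. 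Once these two points are in place the rest is the classical Nash--Carlen--Kusuoka--Stroock argument carried out verbatim.
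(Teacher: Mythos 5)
Your strategy is essentially the one the paper uses: a uniform subelliptic Sobolev inequality on $U$ obtained from H\"ormander's condition and compactness of $\bar U$ (the paper patches the local inequalities of Sturm/Nagel--Stein--Wainger with a partition of unity), a G\aa rding-type lower bound $\EE(f,f)+\alpha\|f\|_2^2\geq\varepsilon\|Wf\|_2^2$ absorbing the $\diver_\mu W_j$ terms by Cauchy--Schwarz, the resulting Nash inequality, the Nash differential-inequality iteration, and finally the kernel via duality, $C_b(U)$-preservation and Riesz representation. The only substantive point to flag is your treatment of the non-symmetry. Your first branch ($P_t:L^1\to L^2$) rests on $\|P_tf\|_1\leq\|f\|_1$, which you justify by asserting that $P_t$ is a contraction on every $L^p$ and that the adjoint of a semi-Dirichlet form is again a semi-Dirichlet form; neither follows from the hypotheses (the whole point of the semi-Dirichlet setting is that only one of the two semigroups is assumed sub-Markov, and the dual need not be). Fortunately this branch is superfluous: the branch you actually need is the Nash iteration for the adjoint semigroup $P_t^*$, and for that one only needs $\|P_t^*f\|_1\leq\|f\|_1$, which is immediate by duality from the $L^\infty$-contractivity of $P_t$ (i.e.\ from the assumed sub-Markovianity) --- no sub-Markovianity of $P_t^*$ is required, since the diagonal values $\EE(u,u)$ of the form and its adjoint coincide. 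This is exactly why the paper runs the iteration directly on $u_t=P_t^*f$ for $f$ in the domain of the adjoint generator with $\|f\|_1\leq 1$, obtaining $\|P_t^*f\|_2\leq bt^{-\nu/2}$, and then realises $p_t(x,\cdot)$ as the weak $L^2$-limit of $P_t^*f_n$ for $f_n$ approximating $\delta_x$, using $P_t\phi\in C_b(U)$ to identify the limit of $\langle P_t\phi,f_n\rangle$ with $P_t\phi(x)$. If you delete the unjustified $L^1$-contractivity claim for $P_t$ and keep only the dual branch, your argument coincides with the paper's.
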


The proof of Lemma~\ref{lem:L2Density} is based on the sub-Riemannian Sobolev inequality combined with a classical argument of Nash~\cite{Nash58}.
We believe this result should be standard, but as we were unable to find a sufficiently similar form in the literature, we prefer to give a proof in Appendix~\ref{appendix:proof} (see~\cite{SCS91,Sturm95} for closely related results in the case that $\EE$ is symmetric or positive semi-definite).

Note also that in the sequel, namely in the proof of Theorem~\ref{thm:WHormander}, we will only require the fact from Lemma~\ref{lem:L2Density} that the kernel $p_t$ exists.
The bound on $\|p_t(x,\cdot)\|_{2}$ is merely a free consequence of the proof of its existence.

\subsection{Density for RDEs}

We now specialise to the setting of Markovian rough paths. Recall Notation~\ref{subsec:Notation} and consider the RDE
\begin{equation}\label{eq:RDE}
d\Y_t = V(\Y_t)d\X_t\;, \quad \Y_0 = y_0 \in \R^e\;,
\end{equation}
for smooth vector fields $V = (V_1,\ldots, V_d)$ on $\R^e$. We suppose also that $V$ are $\Lip^2$ so that~\eqref{eq:RDE} admits a unique solution. We fix also the starting point $\X_0 = x_0 \in G$ of $\X$.

For the reader's convenience, we recall the Nagano--Sussmann orbit theorem (see, e.g.,~\cite[Chpt.~5]{AgrachevSachkov04}).
\begin{theorem}[Orbit theorem, Nagano--Sussmann]\label{thm:orbit}
Let $W$ be a set of complete smooth vector fields on a smooth manifold $M$.
Let $\OO$ denote the orbit of $W$ through a point $z_0\in M$.
Then $\OO$ is a connected immersed submanifold of $M$.
Furthermore, for any $z \in \OO$,
\[
T_{z}\OO = \spn{ \mathrm{d} (P^{-1})_{P(z)} w(P(z)) \mid P \in \PP, w \in W }\;,
\]
where
\[
\PP =  \{e^{t_1 w_1}\circ\ldots \circ e^{t_k w_k} \mid t_i \in \R, \; w_i \in W , \; k \geq 1\} \subset  \Diff\, M\;.
\]
\end{theorem}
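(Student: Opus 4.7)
The plan is to equip $\OO$ with a smooth manifold structure via local parametrisations built out of compositions of flows of vector fields in $\PP^{*}W := \{P^{*}w \mid P \in \PP,\, w \in W\}$, and then read off the tangent space. Write $L_z$ for the span appearing in the statement, i.e.\ the candidate tangent space at $z \in \OO$.

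The first step is the key \emph{$\PP$-invariance}: $\mathrm{d}Q_z(L_z) = L_{Q(z)}$ for every $Q\in\PP$. Given $P\in\PP$ and $w\in W$, the diffeomorphism $\tilde P := P\circ Q^{-1}$ also lies in $\PP$, and a chain-rule computation gives
\[
\mathrm{d}Q_z\bigl(\mathrm{d}(P^{-1})_{P(z)}w(P(z))\bigr) = \mathrm{d}(\tilde P^{-1})_{\tilde P(Q(z))}\,w(\tilde P(Q(z))) \in L_{Q(z)}.
\]
Since $\mathrm{d}Q_z$ is an isomorphism and every two points of $\OO$ are related by some $Q\in\PP$ (by definition of the orbit), the dimension $k := \dim L_z$ is constant on $\OO$.

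Next I would construct local charts. Fix $z\in\OO$ and choose $X_1,\ldots,X_k\in\PP^{*}W$ whose values at $z$ form a basis of $L_z$. The map $\Phi(t_1,\ldots,t_k) := e^{t_1 X_1}\circ\cdots\circ e^{t_k X_k}(z)$ is smooth near the origin of $\R^k$, its differential at $0$ sends the standard basis to $X_1(z),\ldots,X_k(z)$, so $\Phi$ is an immersion at $0$. Writing $X_i = P_i^{*}w_i$ yields $e^{tX_i} = P_i^{-1}\circ e^{tw_i}\circ P_i \in \PP$, hence the image of $\Phi$ lies in $\OO$. I would then declare the local inverses of these $\Phi$ (on small enough balls where each is a diffeomorphism onto its image) to form an atlas on $\OO$.

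To finish, transition maps between any two such charts — possibly centred at different points and built from different $X_i$ — are smooth because each chart is expressible in terms of flows of smooth vector fields in $\PP^{*}W$ on an open subset of $M$, and smooth dependence of flows on parameters and initial data gives smoothness of the composition. With the resulting manifold structure, $T_z\OO = \mathrm{d}\Phi_0(\R^k) = L_z$ by construction, and connectedness of $\OO$ is immediate since every point of $\OO$ is reached from $z_0$ by a concatenation of flow segments, which is a continuous path. The main obstacle is that the inclusion $\OO\hookrightarrow M$ is generally only an immersion and not an embedding, so the manifold topology on $\OO$ can be strictly finer than the subspace topology; verifying Hausdorff-ness and second countability requires care, and is handled by restricting flow parameters so charts are injective and exploiting $\PP$-invariance of $L$ to identify any putative chart overlap as a genuine identification within $\OO$ itself.
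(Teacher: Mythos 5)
The paper does not prove this statement: it is recalled as a classical result with a citation to Agrachev--Sachkov, so there is no in-paper argument to compare against. Your sketch follows the standard Nagano--Sussmann proof (the one in that reference): establish $\PP$-invariance of the candidate distribution $L$, deduce constancy of $\dim L_z$ along the orbit, build immersion charts from flows of the pulled-back fields $P^*w$, and topologise $\OO$ via these charts. The $\PP$-invariance computation, the constancy of rank, the identity $e^{tP^*w}=P^{-1}\circ e^{tw}\circ P\in\PP$ (so the chart images lie in $\OO$), and the computation $\mathrm{d}\Phi_0(e_i)=X_i(z)$ are all correct.

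There is one genuine gap, and it is the heart of the theorem: you only verify that $\mathrm{d}\Phi$ has rank $k$ \emph{at the origin}. What the argument actually needs is that $\mathrm{Im}\,\mathrm{d}\Phi_t\subseteq L_{\Phi(t)}$ for \emph{all} $t$, hence (by lower semicontinuity of rank and $\dim L_{\Phi(t)}=k$) that $\mathrm{Im}\,\mathrm{d}\Phi_t=L_{\Phi(t)}$ for $t$ near $0$. This follows from the $\PP$-invariance you proved, since $\partial_{t_i}\Phi(t)$ is the image of $X_i$ under the differential of $e^{t_1X_1}\circ\cdots\circ e^{t_{i-1}X_{i-1}}\in\PP$, but it must be stated: without it, the chart images are not known to be integral manifolds of $L$, and then (i) the claim that chart overlaps are open in both chart topologies and that transition maps are smooth is unjustified --- this is exactly the failure mode for leaves of non-involutive distributions, where one immersed image can meet another in a non-open, even totally disconnected, set; and (ii) the identification $T_z\OO=L_z$ is only established at chart centres rather than on all of $\OO$. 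Your closing sentence gestures at ``exploiting $\PP$-invariance of $L$'' for the overlaps, which is the right instinct, but the tangency lemma $\mathrm{Im}\,\mathrm{d}\Phi_t=L_{\Phi(t)}$ is the precise statement that makes both the atlas compatibility and the tangent-space formula go through, and it should appear explicitly. Once it is added, the rest of your outline (second countability and Hausdorffness of the chart topology, connectedness by flow paths) is routine.
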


A particularly useful consequence of the orbit theorem is the following.

\begin{corollary}\label{cor:Frobenius}
Let notation be as in Theorem~\ref{thm:orbit}.
It holds that $\Lie_{z}W \subseteq T_{z}\OO$ for all $z \in \OO$.
Furthermore, $\Lie_{z}W = T_{z}\OO$ for all $z \in \OO$ if and only if $\dim \Lie_zW$ is constant in $z$.
\end{corollary}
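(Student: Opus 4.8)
The plan is to combine the orbit theorem (Theorem~\ref{thm:orbit}) with a Frobenius-type argument, treating the two assertions separately. For the inclusion $\Lie_z W \subseteq T_z\OO$, the key observation is that each vector field $w \in W$ is tangent to the orbit $\OO$ (since the flow of $w$ maps $\OO$ into itself, by definition of $\OO$ as the orbit), hence restricts to a smooth vector field $\tilde w$ on the immersed submanifold $\OO$. Since $\OO$ is an immersed submanifold, the restriction map on vector fields is a Lie algebra homomorphism: brackets of the $\tilde w$'s are the restrictions of the brackets of the $w$'s. By induction on the length of iterated brackets, every element of $\Lie_z W$ is the value at $z$ of such a restricted vector field, and therefore lies in $T_z\OO$. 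This gives $\Lie_z W \subseteq T_z\OO$ for all $z \in \OO$, unconditionally.

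For the equivalence, one direction is immediate: if $\Lie_z W = T_z\OO$ for all $z\in\OO$, then since $\OO$ is a connected manifold (Theorem~\ref{thm:orbit}) its dimension is constant, so $\dim\Lie_z W = \dim T_z\OO$ is constant. For the converse, suppose $\dim\Lie_z W =: k$ is constant on $\OO$. Then $z \mapsto \Lie_z W$ defines a rank-$k$ distribution $D$ on $\OO$; I would first check it is smooth (locally spanned by finitely many of the iterated-bracket vector fields $\tilde w_I$, which is possible near each point since their values span a $k$-dimensional space there and $k$ is locally maximal). The distribution $D$ is involutive by construction (it is closed under Lie bracket of the $W_i$'s and their commutators, essentially by the Jacobi identity). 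By the Frobenius theorem, $D$ is integrable, so through each point $z\in\OO$ there is a maximal integral leaf $L_z$ with $T_wL_z = D_w$ for all $w\in L_z$; in particular each $w\in W$ is tangent to $L_z$, so the flows $e^{t w}$ preserve $L_z$. Consequently every element of the group $\PP$ maps $L_{z_0}$ into itself, which shows the orbit $\OO$ of $z_0$ is contained in $L_{z_0}$. Combined with $\Lie_z W \subseteq T_z\OO$ from the first part and $\dim\Lie_z W = \dim L_{z_0} = \dim T_z\OO$, we conclude $\Lie_z W = T_z\OO$.

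The step I expect to be the main technical point is establishing smoothness of the distribution $D = \Lie W$ on $\OO$ under the constant-rank hypothesis, so that the Frobenius theorem applies; the involutivity is essentially formal, but one must be careful that a local spanning family of bracket vector fields can be chosen consistently near each point, using that $k$ is both the rank at the point and an upper bound nearby. A secondary subtlety is bookkeeping the immersed (rather than embedded) submanifold structure of $\OO$ when asserting that restriction of vector fields commutes with the Lie bracket; this is standard but should be stated explicitly. Everything else follows by assembling the orbit theorem with Frobenius in the manner sketched above.
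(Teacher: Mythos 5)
Your proof is correct and follows essentially the same route as the paper: the inclusion $\Lie_zW\subseteq T_z\OO$ and the ``only if'' direction are treated as immediate, and the ``if'' direction applies the Frobenius theorem to the constant-rank involutive distribution $\Lie\,W$ on $\OO$. The only cosmetic difference is the closing step --- the paper notes that each leaf of the resulting foliation is itself an orbit of $W$, so the foliation has the single leaf $\OO$, whereas you show $\OO\subseteq L_{z_0}$ via flow-invariance of the leaf and finish with a dimension count; both are valid.
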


\begin{proof}
The fact that $\Lie_{z}W \subseteq T_{z}\OO$ and the ``only if'' implication are obvious.
For the ``if'' implication, suppose $\dim \Lie_zW$ is constant in $z \in \OO$.
Then $\Lie \; W$ defines a distribution on $\OO$ (a subbundle of the tangent bundle), so the Frobenius theorem implies that $\Lie\; W$ arises from a regular foliation of $\OO$.
However, each leaf of this foliation is itself an orbit of $W$.
Therefore the foliation contains only one leaf, namely $\OO$, which concludes the proof.
\end{proof} 

Consider the manifold $G \times \R^e$. We canonically identify the tangent space $T_{(x,y)}(G \times \R^e)$ with $T_x G \oplus T_y\R^e$ and define smooth vector fields on $G \times \R^e$ by $W_i = U_i + V_i$. Let $z_0 = (x_0,y_0) \in G \times \R^e$ and denote by $\OO = \OO_{z_0}$ the orbit of $z_0$ under the collection $W = (W_1,\ldots, W_d)$.

Denote the couple $\ZZ_t = (\X_t,\Y_t)$ which is a Markov process on $G \times \R^e$. One can readily show that a.s. $\ZZ^{z_0}_t \in \OO$ for all $t > 0$ (e.g., by approximating each sample path of $\X$ in $p$-variation for some $p > 2$ by piecewise geodesic paths).

\begin{theorem}\label{thm:WHormander}
Suppose $W$ satisfies H{\"o}rmander's condition on $\OO$, i.e., $\Lie_z W = T_z\OO$ for all $z \in \OO$.
Then for all $t > 0$, $\ZZ^{z_0}_t$ admits a density with respect to any smooth measure on $\OO$.
\end{theorem}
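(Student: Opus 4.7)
The plan is to identify, locally on the orbit $\OO$, the law of $\ZZ^{z_0}$ with the semigroup of a semi-Dirichlet form of the type studied in Section~\ref{subsec:semiDir}, and then invoke Lemma~\ref{lem:L2Density} to produce a density kernel. By the orbit theorem, $\OO$ is a connected immersed submanifold of $G\times\R^e$, and by hypothesis $W=(W_1,\ldots,W_d)$ is a collection of H{\"o}rmander vector fields on $\OO$, placing us exactly in the setting of that section. I would fix any smooth measure $\mu$ on $\OO$ (the conclusion is independent of this choice since any two smooth measures have smooth positive relative density) and extend the coefficient $a$ trivially from $G$ to $G\times\R^e$ by $\tilde a(x,y):=a(x)$, which preserves the ellipticity bound~\eqref{eq:aLower}.

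The key formal computation to establish is that the generator of $\ZZ$ on $C^\infty_c$ takes the divergence form $Lf = \sum_{i,j} W_i(\tilde a^{i,j}\,W_jf)$. This is transparent when $a$ is smooth: writing $\X$ via a Stratonovich SDE along the vector fields $U_i$, the couple $\ZZ$ satisfies the analogous SDE with each $U_i$ replaced by $W_i=U_i+V_i$, and the identity $V_k\tilde a^{i,j}=0$ (which holds because $\tilde a$ depends only on the $G$-variable) eliminates the cross-terms that would otherwise appear. For an open $U\subset\OO$ the corresponding bilinear form on $C^\infty_c(U)$ is then precisely the form $\EE$ of Section~\ref{subsec:semiDir}, with coefficient $\tilde a$.

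I would next exhaust $\OO$ by an increasing sequence $U_1\Subset U_2\Subset\cdots$ of relatively compact open sets containing $z_0$, let $\tau_n$ be the first exit time of $\ZZ^{z_0}$ from $U_n$, and apply Lemma~\ref{lem:L2Density} on each $U_n$ to the killed semigroup
\[
P_t^{U_n}f(z) = \EEEover{z}{f(\ZZ_t)\1{t<\tau_n}}.
\]
This would furnish density kernels $p_t^n(z_0,\cdot)\in L^2(U_n,\mu)$ for every $t>0$. Since $\ZZ^{z_0}$ has continuous paths in $\OO$, $\tau_n\nearrow\infty$ almost surely, so for every Borel set $A\subset\OO$ with $\mu(A)=0$,
\[
\PPP{\ZZ^{z_0}_t\in A} = \lim_{n\to\infty}\PPP{\ZZ^{z_0}_t\in A,\,t<\tau_n} = \lim_{n\to\infty}\int_{A}p_t^n(z_0,y)\,\mu(dy) = 0,
\]
which gives the claimed absolute continuity.

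The main technical obstacle is rigorously identifying the killed semigroup $P_t^{U_n}$ of the rough-path-defined process $\ZZ$ with the Hunt semigroup of the abstract Dirichlet form $\EE$ on $L^2(U_n,\mu)$ when $a$ is merely measurable. For smooth $a$ this is standard SDE theory; in the general case I would mollify $a$ to a smooth $a_\varepsilon$, identify the two semigroups at level $\varepsilon$, and pass to the limit using the continuity of the It{\^o}--Lyons map for $\ZZ^{a_\varepsilon}$ on the rough paths side and Mosco (or analogous) convergence of the forms $\EE^{a_\varepsilon}$ on the Dirichlet form side. Once this identification is in place, the sub-Markov property and $C_b(U_n)$-invariance hypotheses required by Lemma~\ref{lem:L2Density} become automatic consequences of the construction of $\ZZ$, the latter via the Feller property inherited from $\X$.
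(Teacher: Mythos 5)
Your proposal is correct and follows essentially the same route as the paper: the orbit theorem plus H{\"o}rmander's condition place $\ZZ$ in the setting of Section~\ref{subsec:semiDir}, a compact exhaustion of $\OO$ yields killed semigroups to which Lemma~\ref{lem:L2Density} applies, and the resulting kernels pass to the limit to give the density. The only point where you diverge is the identification of the killed semigroup with that of the semi-Dirichlet form for merely measurable $a$, which you propose to handle by mollification and Mosco convergence; the paper instead computes the weak infinitesimal generator of $P_t^{U}$ directly from the stochastic Taylor expansion and the known infinitesimal behaviour of the coordinate projections of $\X^a$ (Lemmas~\ref{lem:infProj} and~\ref{lem:infGen}), which works for measurable $a$ without approximating the process itself.
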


The proof of Theorem~\ref{thm:WHormander} will be given at the end of this section.
We first state several remarks and a consequence of the theorem.

\begin{remark}\label{remark:levelOne}
Note that from Notation~\ref{subsec:Notation} we always consider $G = G^N(\R^d)$ with $N \geq 2$.
However, in the special case that $a(x)$ depends only on the first level $\pi_1(x)$ for all $x \in G^N(\R^d)$, the identical statement in Theorem~\ref{thm:WHormander} holds for the process $\ZZ_t = (\pi_1(\X_t),\Y_t) \in \R^d\times \R^e$ (the conditions change by substituting $G$ by $\R^d$ everywhere).
The reason for this is that Lemma~\ref{lem:infGen} below can be readily adjusted to give analogous infinitesimal behaviour of the process $\ZZ_t$ (now taking values in $\OO \subseteq \R^d \times \R^e$), after which the proof of the theorem carries through without change.
\end{remark}

For a statement of the density of $\Y_t$ itself, let $\OO' \subseteq \R^e$ denote the orbit of $y_0 \in \R^e$ under $V$.
\begin{lemma}\label{lem:ZImpliesY}
Suppose $\ZZ_t^{z_0}$ admits a density with respect to a smooth measure on $\OO$.
Then $\Y_t$ admits a density with respect to any smooth measure on $\OO'$.
\end{lemma}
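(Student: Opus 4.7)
The plan is to project via $\pi : G \times \R^e \to \R^e$, $(x,y) \mapsto y$; since $\pi$ intertwines $W_i = U_i + V_i$ with $V_i$ (as $U_i$ and $V_i$ act on disjoint factors), we have $\pi \circ e^{tW_i} = e^{tV_i} \circ \pi$, and by composition, for each $P = e^{t_1 W_{i_1}} \circ \cdots \circ e^{t_k W_{i_k}} \in \PP$ the corresponding $V$-composition $Q := e^{t_1 V_{i_1}} \circ \cdots \circ e^{t_k V_{i_k}}$ satisfies $\pi \circ P = Q \circ \pi$. This shows $\pi(\OO) = \OO'$, and since orbits are initial submanifolds of the ambient space, $\pi$ restricts to a smooth map $\pi|_\OO : \OO \to \OO'$. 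The strategy is then to show this restriction is a submersion, from which the conclusion follows by pulling back null sets.

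I claim that $\pi|_\OO$ is a submersion. Fix $z = (x,y) \in \OO$. By Theorem~\ref{thm:orbit}, $T_z\OO$ is spanned by the vectors $v_{P,j} := d(P^{-1})_{P(z)} W_j(P(z))$ as $P \in \PP$ and $j = 1,\ldots,d$ vary. Using $\pi \circ P^{-1} = Q^{-1} \circ \pi$ and $d\pi \cdot W_j = V_j \circ \pi$, a chain-rule computation gives
\[
d\pi_z(v_{P,j}) \;=\; d(\pi \circ P^{-1})_{P(z)} W_j(P(z)) \;=\; d(Q^{-1})_{Q(y)} V_j(Q(y)).
\]
As $(P,j)$ varies, $(Q,j)$ ranges over precisely the data appearing in Theorem~\ref{thm:orbit} applied to $V$ on $\R^e$ at $y_0$, so these images span $T_y\OO'$. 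Hence $d\pi_z(T_z\OO) = T_y\OO'$ and $\pi|_\OO$ is a submersion at every point.

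A submersion between smooth manifolds pulls back null sets of any smooth measure $\mu'$ on the target to null sets of any smooth measure $\mu$ on the source: in local submersion coordinates $\pi|_\OO$ becomes a linear projection of Euclidean spaces, $\mu$ and $\mu'$ have smooth positive Lebesgue densities, and the claim reduces to Fubini. Thus, if $\ZZ_t$ has density $f_t$ with respect to a smooth measure $\mu$ on $\OO$ and $\mu'(A) = 0$, then
\[
\PPP{\Y_t \in A} \;=\; \PPP{\ZZ_t \in (\pi|_\OO)^{-1}(A)} \;=\; \int_{(\pi|_\OO)^{-1}(A)} f_t \, d\mu \;=\; 0.
\]
Hence $\Y_t$ is absolutely continuous with respect to $\mu'$ and admits a density by Radon--Nikodym; since any two smooth measures on $\OO'$ are mutually absolutely continuous, the conclusion holds for every smooth measure. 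The only delicate step is the submersion property, and what makes it work is that the orbit-theoretic description of $T_y\OO'$ is robust to the possibility that $\Lie_y V$ is strictly smaller than $T_y\OO'$ (so that $V$ alone need not satisfy H\"ormander's condition on $\OO'$); an approach relying solely on iterated brackets of $V$ would not suffice.
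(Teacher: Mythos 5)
Your proof is correct and follows the same route as the paper: the paper's proof likewise invokes the orbit theorem's description of $T_z\OO$ to conclude that $(x,y)\mapsto y$ restricts to a surjective submersion $\OO\to\OO'$ and then uses that submersions pull back null sets of smooth measures to null sets. You have simply written out the details (the intertwining $\pi\circ P=Q\circ\pi$, the chain-rule computation of $d\pi_z$ on the spanning vectors, and the local Fubini argument) that the paper leaves implicit.
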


\begin{proof}
By the description of the tangent space $T_z \OO$ in Theorem~\ref{thm:orbit}, it holds that the projection $p_2 : \OO \to \OO', (x,y) \mapsto y$, is a (surjective) submersion (in fact a smooth fibre bundle) from $\OO$ to $\OO'$.
The conclusion follows from the fact that pre-images of null-sets under submersions are null-sets for smooth measures.
\end{proof} 

Moreover, the condition in Theorem~\ref{thm:WHormander} may be restated in terms of just the driving vector fields $V = (V_1,\ldots, V_d)$ as follows.

\begin{lemma}\label{lem:equiv_Hor}
For a multi-index $I = (i_1,\ldots, i_k) \in \{1,\ldots, d\}^k$ of length $|I| = k$, denote by $V_{[I]}$ the vector field $[[\ldots[V_{i_1},V_{i_2}],\ldots],V_{i_k}]$.
It holds that $W$ satisfies H{\"o}rmander's condition on $\OO$ if and only if
\begin{equation}\label{eq:HorCond}
\textnormal{$\dim \textnormal{span}\{V_{[I]}(y) : |I| > N \} \subseteq T_y\R^e$ is constant in $y \in \OO'$.}
\end{equation}
\end{lemma}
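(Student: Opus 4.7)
My approach exploits the step-$N$ nilpotency of $G$ to obtain an explicit formula for $\Lie_{(x,y)} W$ at every $(x,y) \in \OO$, after which the equivalence reduces to a constancy statement handled by Corollary~\ref{cor:Frobenius}.

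First I would compute $W_{[J]}$ explicitly. Viewing $U_i$ and $V_j$ as vector fields on $G \times \R^e$ pulled back along the two projections, they commute (being supported on different factors), so iterated application of the Leibniz rule for Lie brackets gives $W_{[J]}(x,y) = (U_{[J]}(x), V_{[J]}(y))$ for every multi-index $J$, and $U_{[J]} \equiv 0$ whenever $|J| > N$ because $\g = \g^N(\R^d)$ is nilpotent of step $N$. Consequently
\[
\Lie_{(x,y)} W = \textnormal{span}\{(U_{[J]}(x), V_{[J]}(y)) : |J| \leq N\} + \textnormal{span}\{(0, V_{[J]}(y)) : |J| > N\}.
\]
Since $U_1,\ldots, U_d$ generate $\g$ via iterated brackets of length $\leq N$, and the $U_{[J]}$ are left-invariant, the vectors $U_{[J]}(x)$ with $|J|\leq N$ span $T_x G$. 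The projection onto the $G$-factor restricted to $\Lie_{(x,y)} W$ is therefore surjective, which gives
\[
\dim \Lie_{(x,y)} W \;=\; \dim G \;+\; \dim\bigl(\Lie_{(x,y)} W \cap T_y \R^e\bigr).
\]

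The crux is to identify this intersection with $\textnormal{span}\{V_{[J]}(y) : |J| > N\}$. The inclusion $\supseteq$ is clear; for $\subseteq$, I would argue that any linear relation $\sum_{|J|\leq N} c_J U_{[J]}(x) = 0$ forces $\sum_{|J|\leq N} c_J V_{[J]}(y) = 0$. By left-invariance, the former is the assertion $\sum c_J X_{[J]} = 0$ in $\g^N(\R^d)$, where $X_1,\ldots, X_d$ are the canonical generators. Since $\g^N(\R^d)$ is the quotient of the free Lie algebra on $d$ generators by its ideal of elements of degree exceeding $N$, and since $\sum c_J X_{[J]}$ is supported in degrees $\leq N$, the relation in fact holds in the free Lie algebra itself. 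The universal property then extends $X_i \mapsto V_i$ to a Lie algebra homomorphism into $\Vect(\R^e)$, sending the relation to $\sum c_J V_{[J]} \equiv 0$ as vector fields on $\R^e$.

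Combining these steps yields $\dim \Lie_{(x,y)} W = \dim G + \dim\textnormal{span}\{V_{[J]}(y) : |J| > N\}$, a quantity depending only on $y$. The projection $p_2 : \OO \to \OO'$ is surjective, since the orbit theorem expresses points of $\OO$ via compositions of flows $e^{t W_i}$ whose $\R^e$-projections are precisely $e^{t V_i}$. Hence $\dim \Lie_z W$ is constant on $\OO$ iff $y \mapsto \dim\textnormal{span}\{V_{[J]}(y) : |J|>N\}$ is constant on $\OO'$, and Corollary~\ref{cor:Frobenius} converts this into the desired equivalence with H\"ormander's condition. The main obstacle is the identification of the kernel with the ``high-degree'' span alone: one must rule out spurious low-degree contributions, which is exactly where the particular structure of the free nilpotent Lie algebra (its degree-$\leq N$ part being that of the free Lie algebra) is needed.
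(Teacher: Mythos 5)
Your proof is correct and follows essentially the same route as the paper: both hinge on the identity $\dim \Lie_{(x,y)} W = \dim G + \dim \spn{V_{[I]}(y) : |I| > N}$ together with Corollary~\ref{cor:Frobenius} for the converse direction. The only difference is that the paper states this identity as an observation, whereas you supply the justification (the graded structure of the free Lie algebra ruling out low-degree relations, and the surjectivity of $p_2 : \OO \to \OO'$), which is a welcome elaboration rather than a departure.
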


\begin{proof}
Since the vector fields $U_1,\ldots, U_d$ are freely step-$N$ nilpotent and generate the tangent space of $G$, observe that
\begin{equation}\label{eq:dimLieW}
\dim \Lie_{(x,y)} W = \dim G + \dim \textnormal{span}\{V_{[I]}(y) : |I| > N \}\;, \quad \forall (x,y) \in G\times \R^e\;.
\end{equation}
Suppose $W$ satisfies H{\"o}rmander's condition on $\OO$. 
Then $\dim \Lie_zW$ is constant in $z \in \OO$, and by~\eqref{eq:dimLieW} it follows that~\eqref{eq:HorCond} holds.
Conversely, suppose~\eqref{eq:HorCond} holds.
It now follows from~\eqref{eq:dimLieW} that $\dim \Lie_zW$ is constant in $z \in \OO$, and thus $W$ satisfies H{\"o}rmander's condition on $\OO$ by Corollary~\ref{cor:Frobenius}.
\end{proof}

Combining Theorem~\ref{thm:WHormander} with Lemmas~\ref{lem:ZImpliesY} and~\ref{lem:equiv_Hor}, we obtain the following corollary.

\begin{corollary}\label{cor:HorCond}
Suppose condition~\eqref{eq:HorCond} holds.
Then for all $t > 0$, the RDE solution $\Y_t$ admits a density with respect to any smooth measure on $\OO'$.
\end{corollary}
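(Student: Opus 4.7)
The plan is to simply chain the three results that precede the corollary, since each of them has already done the substantive work. No new analysis is needed; the corollary is purely a packaging statement.

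First, I would invoke Lemma~\ref{lem:equiv_Hor} in the ``if'' direction to convert the hypothesis on the iterated Lie brackets $V_{[I]}$ with $|I| > N$ into the statement that the enlarged family $W = (W_1,\ldots,W_d)$ of vector fields on $G \times \R^e$, with $W_i = U_i + V_i$, satisfies Hörmander's condition on the orbit $\OO = \OO_{z_0}$ through $z_0 = (x_0,y_0)$. This is the step where condition~\eqref{eq:HorCond} is actually used; the equivalence was proved via the identity~\eqref{eq:dimLieW} together with Corollary~\ref{cor:Frobenius}.

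Next, having Hörmander's condition on $\OO$, I would apply Theorem~\ref{thm:WHormander} to conclude that for every $t > 0$ the coupled process $\ZZ^{z_0}_t = (\X_t,\Y_t)$ admits a density with respect to any smooth measure on $\OO$. Finally, I would apply Lemma~\ref{lem:ZImpliesY}, which uses the submersion property of the projection $p_2 : \OO \to \OO'$, $(x,y) \mapsto y$ (itself a consequence of the description of $T_z\OO$ given by the orbit theorem, Theorem~\ref{thm:orbit}) to transfer this density from $\ZZ^{z_0}_t$ to its second marginal $\Y_t$, yielding a density of $\Y_t$ with respect to any smooth measure on $\OO'$.

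There is essentially no obstacle in this step, as the work has been front-loaded into Theorem~\ref{thm:WHormander} and the two lemmas. The only thing to be careful about is ensuring that the ``if'' direction of Lemma~\ref{lem:equiv_Hor} is what is being invoked (since~\eqref{eq:HorCond} is precisely the hypothesis of that direction), and that the smooth measure on $\OO'$ used in the conclusion corresponds to the push-forward setting covered by Lemma~\ref{lem:ZImpliesY}.
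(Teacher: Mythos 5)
Your proposal is correct and is exactly the paper's argument: the corollary is obtained by combining Lemma~\ref{lem:equiv_Hor} (to convert~\eqref{eq:HorCond} into H{\"o}rmander's condition for $W$ on $\OO$), Theorem~\ref{thm:WHormander} (density of $\ZZ^{z_0}_t$ on $\OO$), and Lemma~\ref{lem:ZImpliesY} (projection to $\Y_t$ on $\OO'$).
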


\begin{remark}
Note that $\OO' = \R^e$ whenever $V$ satisfies H{\"o}rmander's condition on $\R^e$, in which case every smooth measure is equivalent to the Lebesgue measure.
\end{remark}

\begin{remark}
Following Remark~\ref{remark:levelOne}, in the case that $a(x)$ depends only on the first level $\pi_1(x)$, we are able to take $N=1$ in~\eqref{eq:HorCond} when applying Corollary~\ref{cor:HorCond}.
\end{remark}

\begin{remark}
Note that while~\eqref{eq:HorCond} (for any $N\geq 0$) implies that $V$ satisfies H{\"o}rmander's condition on $\OO'$, the reverse implication is clearly not true. In particular, we do not know if it is sufficient for $V$ to only satisfy H{\"o}rmander's condition on $\OO'$ in order for $\Y_t$ to admit a density on $\OO'$.
The difficulty of course is that unless~\eqref{eq:HorCond} is satisfied, the couple $(\X_t,\Y_t)$ will in general not admit a density in $\OO$, whereby our method of proof breaks down. 
\end{remark}

For the proof of Theorem~\ref{thm:WHormander}, we first recall for the reader's convenience the infinitesimal behaviour of the coordinate projections of $\X^a$. As before, let $\lambda$ denote the Haar measure on $G$.

\begin{lemma}\label{lem:infProj}
Let $g \in C^\infty_c(G)$. Then for all $k,l \in \{1,\ldots, d\}$
\begin{align*}
\lim_{t \rightarrow 0} t^{-1}\gen{g, \EEEover{a,\cdot}{\X_{0,t}^k}}_{L^2(G,\lambda)} &= -\sum_{j=1}^d\int_{G}a^{k,j}(x)U_j g(x)\lambda(dx)\;, \\
\lim_{t \rightarrow 0} t^{-1}\gen{g, \EEEover{a,\cdot}{\X^k_{0,t} \X^l_{0,t}}}_{L^2(G,\lambda)} &= 2\int_{G}a^{k,l}(x) g(x)\lambda(dx)\;, \\
\lim_{t \rightarrow 0} t^{-1}\gen{g, \mathbb{E}^{a,\cdot}\big[\X_{0,t}^{k,l}\big]}_{L^2(G,\lambda)} &= 0\;.
\end{align*}
\end{lemma}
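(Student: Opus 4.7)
The proof strategy is to reduce all three limits to a single Dirichlet-form identity. Let $P_t$ denote the $L^2$-semigroup on $L^2(G,\lambda)$ associated with $\EE = \EE^a$. I claim that for every $g \in C^\infty_c(G)$ and every smooth $f : G \to \R$ of at most polynomial growth in the Carnot--Carath\'eodory distance,
\begin{equation}
\lim_{t \to 0} t^{-1}\langle g, P_tf - f\rangle_{L^2(\lambda)} = -\int_G \sum_{i,j} a^{i,j}(x)(U_ig)(x)(U_jf)(x)\,\lambda(dx)\,. \tag{$*$}
\end{equation}
The right-hand side is well defined since $U_ig$ has compact support, even though $f$ itself need not lie in $L^2(G,\lambda)$.

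The identity $(*)$ is standard for $f \in \DD(\EE)$ via the general Dirichlet-form formula $\langle g, P_tf-f\rangle = -\int_0^t \EE(g, P_sf)\,ds$ and the strong continuity of $P_sf$ in the form norm. To extend to polynomial-growth $f$, I would multiply by a cutoff $\chi_n \in C^\infty_c(G)$ with $\chi_n \equiv 1$ on $B(1_G,n)$ and $\chi_n \equiv 0$ off $B(1_G,2n)$: for $n$ exceeding the radius of $\supp(g)$, both sides of $(*)$ are unchanged by replacing $f$ with $f\chi_n$, and the truncation error $t^{-1}|\langle g, P_t(f(1-\chi_n))\rangle|$ is controlled using the Gaussian upper bound $p_t(x,y) \lesssim t^{-Q/2} e^{-cd(x,y)^2/t}$ (cited earlier from~\cite[Thm~16.11]{FrizVictoir10}); the polynomial growth of $f$ is absorbed by the Gaussian decay uniformly in $t \in (0,1]$.

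With $(*)$ established, the three formulas follow by short algebraic computations. The first is immediate upon taking $f = \pi_k$: since $U_j\pi_k = \delta_{jk}$ and $a$ is symmetric, the right-hand side of $(*)$ equals $-\int \sum_j a^{k,j} U_jg\,d\lambda$. For the second, I expand
\[
\mathbb{E}^{a,x}\bigl[\X^k_{0,t}\X^l_{0,t}\bigr] = \bigl(P_t(\pi_k\pi_l) - \pi_k\pi_l\bigr)(x) - \pi_k(x)(P_t\pi_l - \pi_l)(x) - \pi_l(x)(P_t\pi_k - \pi_k)(x)
\]
and apply $(*)$ to each term (with test functions $g$, $g\pi_k$, $g\pi_l$, all in $C^\infty_c(G)$). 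The Leibniz identity $U_j(\pi_k\pi_l) = \delta_{jk}\pi_l + \delta_{jl}\pi_k$ makes the contributions involving the unbounded factors $\pi_k,\pi_l$ cancel pairwise, leaving $\int(a^{k,l}+a^{l,k})g\,d\lambda = 2\int a^{k,l}g\,d\lambda$. For the third formula, the group-inversion identity $(1,a,A)^{-1} = (1,-a,a^{\otimes 2}-A)$ in the truncated tensor algebra provides an analogous decomposition of $\X^{k,l}_{0,t}$ into semigroup increments acting on level-one and level-two coordinate functions of $\X_t$ and $\X_0$; applying $(*)$ term by term and using $U_j\pi_{k,l} = \pi_k\delta_{jl}$, the symmetry $a^{k,l}=a^{l,k}$ forces all surviving contributions to cancel, yielding $0$.

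The main obstacle is the truncation step in establishing $(*)$: the polynomial growth of $f$ (quadratic for level-two coordinates) must be absorbed by the Gaussian heat-kernel decay \emph{uniformly} in $t \in (0,1]$. This requires optimising the critical point in $t$ of the error bound and verifying that the supremum over $t$ vanishes as $n \to \infty$. Once this is settled, the remainder of the proof is purely algebraic manipulation of the $U_j$-action on coordinate functions together with the symmetry of $a$.
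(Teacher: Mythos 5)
The paper does not actually prove this lemma: its ``proof'' is a one-line deferral to \cite[Lem.~27]{FrizVictoir08} and \cite[Prop.~16.20]{FrizVictoir10}, extended \emph{mutatis mutandis} to $G^N(\R^d)$. Your argument reconstructs what lies behind that citation, and it is essentially the standard computation: identify $\EEEover{a,\cdot}{\cdot}$ of the increment coordinates with semigroup increments of polynomially growing coordinate functions, reduce to the weak generator identity $(*)$, and finish by the Leibniz rule for $U_j$ on $\pi_k$, $\pi_k\pi_l$, $\pi_{k,l}$ together with the symmetry of $a$. Your algebra checks out, including the cancellation of the unbounded $\pi_k,\pi_l$ contributions in the second formula. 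Two remarks. First, the third identity yields $0$ only if $\X^{k,l}_{0,t}$ is read as the antisymmetric (L\'evy area) part of the second level, which is indeed the convention forced by the stochastic Taylor expansion in Lemma~\ref{lem:infGen}; for the raw tensor entry $\pi_{k,l}(\X_0^{-1}\X_t)$ your own computation gives $\int_G a^{k,l}g\,d\lambda$, not $0$ (and its symmetrisation recovers the second formula), so you should state explicitly that you antisymmetrise before invoking $a^{k,l}=a^{l,k}$. Second, the ``main obstacle'' you flag is milder than you suggest: once $n$ is fixed with $\chi_n\equiv 1$ on a neighbourhood of $\supp g$, both $\gen{g,f\chi_n}$ and $\EE(g,f\chi_n)$ agree with the untruncated quantities, and the error $t^{-1}\gen{g,P_t(f(1-\chi_n))}$ is $O(t^{-1-Q/2}e^{-c\,\delta_n^2/t})$ with $\delta_n = d(\supp g,\supp(1-\chi_n))>0$, hence $o(1)$ as $t\to 0$ for that fixed $n$; no uniformity in $t$ over $(0,1]$ is required. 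With these two points made precise, your proof is a complete and self-contained substitute for the citation.
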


\begin{proof}
This is~\cite[Lem.~27]{FrizVictoir08} extended {\it mutatis mutandis} to the general case $G^N(\R^d)$, $N \geq 1$, cf.~\cite[Prop.~16.20]{FrizVictoir10}.
\end{proof}

\begin{lemma}\label{lem:infGen}
Let $U \subset \OO$ be an open subset with compact closure. Consider the (sub-Markov) semi-group $P_t^U$ of $\ZZ_t$ killed upon exiting $U$, defined for all bounded measurable $f : U \to \R$ by
\[
P_t^U f(z) = \EEEover{z}{f(\ZZ_t)\1{\ZZ_s \in U, \forall s \in [0,t]}}\;.
\]
Then $P^U_t$ maps $C_b(U)$ into itself, and for any smooth measure $\mu$ on $\OO$ it holds that for all $f,g \in C^\infty_c(U)$
\begin{equation}\label{eq:EEU}
\lim_{t\rightarrow 0} t^{-1}\gen{P^U_t f - f, g}_{L^2(U,\mu)} = \sum_{i,j=1}^d \int_{U} a^{i,j}(p_1(z)) (W_i f)(z)(W_j^*g)(z) \mu(dz)\;,
\end{equation}
where $p_1 : \OO \to G$ is the projection $(x,y) \mapsto x$ and $W_j^* = -W_j - \diver_\mu(W_j)$ is the adjoint of $W_j$ in $L^2(U,\mu)$.
\end{lemma}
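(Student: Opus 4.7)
The plan splits into two parts. For the first assertion that $P_t^U$ preserves $C_b(U)$, boundedness is immediate from $|P_t^U f|\le\|f\|_\infty$. Continuity at $z\in U$ follows from the continuous dependence of the law of $\ZZ^z$ on $z$ in $\alpha$-H{\"o}lder rough-path topology (combining the Feller property of the $G$-diffusion $\X^{a,\cdot}$, available from the heat-kernel estimates of~\cite[Thm.~16.11]{FrizVictoir10}, with continuity of the It{\^o}-Lyons map defining $\Y$), together with the fact that a.s.\ $\ZZ^z$ spends no positive time on $\partial U$; this makes $\omega\mapsto f(\omega_t)\1{\omega_s\in U,\,\forall s\le t}$ a.s.\ continuous under the limit law, and the continuous mapping theorem then yields $P_t^U f \in C_b(U)$.

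For the infinitesimal identity~\eqref{eq:EEU}, the strategy is a rough-path Taylor expansion combined with Lemma~\ref{lem:infProj}. Fix $f,g\in C^\infty_c(U)$. By the Fernique tail bound~\eqref{eq:Fernique}, the probability that $\ZZ^z$ exits $U$ before time $t$ is super-polynomially small in $t$ whenever $z\in\supp g$, so $\gen{P_t^U f - P_t f, g}_{L^2(\mu)} = o(t)$ where $P_t$ denotes the unkilled semi-group. Davie's expansion for the RDE $d\ZZ_t = W(\ZZ_t)\,d\X_t$ then gives
\[
f(\ZZ_t) - f(z) = \sum_i W_i f(z)\,\X^i_{0,t} + \sum_{i,j} W_j W_i f(z)\,\X^{j,i}_{0,t} + R_t(z),
\]
with $\X^{j,i}_{0,t}=\int_0^t \X^j_{0,s}\,d\X^i_s$ and $|R_t(z)|\lesssim\|f\|_{C^3}\|\X\|_{\pvar;[0,t]}^3$ for $p\in(2,3)$; the Fernique estimate yields $\EEE{|R_t|}=O(t^{3/2})$ uniformly on $\supp g$, so $R_t$ contributes $o(t)$ to $\gen{P_tf-f,g}_{L^2(\mu)}$.

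Next, the geometric identity $\X^{j,i}_{0,t}+\X^{i,j}_{0,t}=\X^j_{0,t}\X^i_{0,t}$ splits the quadratic term into a symmetric piece weighted by $\tfrac12(W_iW_j+W_jW_i)f$ paired with $\X^i_{0,t}\X^j_{0,t}$, and an antisymmetric area piece weighted by $\tfrac12[W_j,W_i]f$ paired with $\X^{j,i}_{0,t}-\X^{i,j}_{0,t}$; by Lemma~\ref{lem:infProj} the latter contributes $o(t)$ to the $L^2(\mu)$ pairing with $g$. Since $\EEEover{z}{\X^i_{0,t}}$ and $\EEEover{z}{\X^i_{0,t}\X^j_{0,t}}$ depend only on $p_1(z)\in G$, I would disintegrate $\mu$ along the submersion $p_1:\OO\to G$ as $\mu(dz)=\lambda(dx)\sigma_x(dy)$ with smooth fiber measures $\sigma_x$ (here $p_1$ is a submersion because $U_i = W_i - V_i$ span $TG$). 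Applying Lemma~\ref{lem:infProj} to the smooth compactly-supported test functions $x\mapsto\int (gW_if)(x,y)\,\sigma_x(dy)$ and $x\mapsto\tfrac12\int g(x,y)(W_iW_j+W_jW_i)f(x,y)\,\sigma_x(dy)$ evaluates the surviving limits; reassembling and using $\int\varphi\,W_j\psi\,d\mu=\int W_j^*\varphi\cdot\psi\,d\mu$ to transfer derivatives from $W_j(a^{i,j}W_if)$ onto $g$ (absorbing the first-order $\diver_\mu W_j$ contribution into $W_j^*$) reproduces the RHS of~\eqref{eq:EEU}.

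The main obstacle, I expect, is precisely this last step: cleanly converting the $(G,\lambda)$-integration-by-parts furnished by Lemma~\ref{lem:infProj} (against $U_j$ on $G$) into the intrinsic $(\OO,\mu)$-identity involving $W_j^*$ on $\OO$, especially since $a$ is merely measurable so the drift of $\X$ exists only distributionally and one must keep all derivatives on the test side throughout. A cleaner implementation is probably to localize on $\OO$ via a partition of unity adapted to local trivializations of $p_1$, reducing to coordinates where $\OO\cong G\times\R^k$ and applying Lemma~\ref{lem:infProj} fiberwise with Fubini on the $G$-factor.
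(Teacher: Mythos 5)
Your proposal follows essentially the same route as the paper: Feller continuity via weak convergence of $\X^{a,x_n}$ in rough-path topology, reduction to the unkilled semi-group by the Fernique exit estimate, the stochastic Taylor expansion with the symmetric/antisymmetric split, fibre integration along the submersion $p_1$ to reduce to Lemma~\ref{lem:infProj}, and a final integration by parts to produce $W_j^*$. The step you flag as the main obstacle is precisely the one the paper also treats briefly, by a limiting procedure reducing to smooth $a$ and invoking the computation of~\cite[p.~503]{FrizVictoir08}, so your assessment of where the real work lies is accurate.
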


\begin{proof}
To show that $P_t^U$ maps $C_b(U)$ into itself, let $f \in C_b(U)$. As $z_n = (x_n,y_n) \rightarrow z = (x,y)$ in $U$, it holds in particular that $x_n \rightarrow x$ in $G$. It follows that $\X^{a,x_n} \convd \X^{a,x}$ in $\alpha$-H{\"o}lder topology for any $\alpha \in [0,1/2)$~\cite[Thm.~16.28]{FrizVictoir10}, and we readily obtain that $P_t^U f(z_n) \rightarrow P_t^U f(z)$. Hence $P_t^U f \in C_b(U)$, so indeed $P_t^U$ maps $C_b(U)$ into itself.

It remains to verify~\eqref{eq:EEU}. Note that for every $z \in U$ the probability that $\ZZ^{z}$ leaves $U$ in $[0,t]$ is bounded above by $C^{-1}\exp(-Ct^{-1})$ for some $C = C(z,U,\Lambda) > 0$ (see, e.g., the Fernique estimate~\eqref{eq:Fernique}). It follows by a localisation argument and the stochastic Taylor expansion (e.g.,~\cite[Lem.~26]{FrizVictoir08}), that
\begin{align}\label{eq:inf1}
\nonumber \lim_{t\rightarrow 0} t^{-1}\gen{P^U_t f - f, g}_{L^2(U,\mu)}
=& \lim_{t \rightarrow 0}t^{-1} \int_U \Big(\sum_{i=1}^d W_i f(z) \EEEover{x}{\X_{0,t}^{ i}} \\
&+ \sum_{i,j=1}^d\frac{1}{2}W_iW_jf(z) \mathbb{E}^{x}\big[\X_{0,t}^{i}\X_{0,t}^{x;j}\big] \\
\nonumber &+ \sum_{i,j=1}^d\frac{1}{2}[W_i,W_j]f(z)\mathbb{E}^{x}\big[\X_{0,t}^{i,j}\big] \Big) g(z)\mu(dz)\;.
\end{align}
Since $p_1 : \OO \to G$ is a (surjective) submersion (in fact a smooth fibre bundle), by integrating over the fibres (e.g.,~\cite[p.~307]{GuilleminSternberg77}) we can associate to any $v \in C^\infty_c(U)$ a function $\hat v \in C^\infty_c(G)$ such that for any bounded measurable $	h : G \to \R$ 
\[
\int_U (h \circ p_1)(z) v (z) \mu(dz) = \int_{G} h(x) \hat v(x)\lambda(dx)\;.
\]
In particular, setting $v_i := (W_if)g$, $v_{i,j} := (W_iW_jf)g$ and $w_{i,j} := ([W_i,W_j]f)g$, we can apply Lemma~\ref{lem:infProj} to obtain that~\eqref{eq:inf1} equals
\begin{equation}\label{eq:inf2}
\sum_{i,j=1}^d \int_{G} \left[-a^{i,j}(x)(U_j \hat v_i)(x) + a^{i,j}(x)\hat v_{i,j}(x) \right] \lambda(dx)\;.
\end{equation}
It remains to show that~\eqref{eq:inf2} agrees with the RHS of~\eqref{eq:EEU}. To this end, we may assume by a limiting procedure that $a$ is smooth, and note that the same argument as in~\cite[p.~503]{FrizVictoir08} applies {\it mutatis mutandis} to our current setting.
\end{proof}

\begin{proof}[Proof of Theorem~\ref{thm:WHormander}]
Consider an increasing sequence of relatively compact open sets $(U_n)_{n \geq 1}$ such that $\cup_{n \geq 1} U_n = \OO$. By Lemma~\ref{lem:infGen}, we can apply Lemma~\ref{lem:L2Density} to conclude that for every $x \in \OO$ and $n \geq 1$ such that $x \in U_n$, there exists a non-negative kernel $p^n_t(x,\cdot) \in L^2(U_n,\mu)$ such that $P^{U_n}_t f(x) = \gen{p^n_t(x,\cdot), f}_{L^2(U_n,\mu)}$ for all $f \in C_b(U_n)$. 
Moreover, by definition of $P^{U_n}_t$, the sequence $p^n_t(x,\cdot)$ is increasing in $n$ and satisfies $\norm{p^n_t(x,\cdot)}_{L^1(U_n,\mu)} \leq 1$. Hence the limit $p_t(x,\cdot) := \lim_{n \rightarrow \infty}p^n_t(x,\cdot)$ is almost everywhere finite and gives precisely the transition kernel of the Markov process $\ZZ_t$ in $\OO$ with respect to $\mu$.
\end{proof}

\begin{remark}\label{remark:precompacts}
The pre-compact subsets $U_n$ were considered in the proof only to obtain existence of $p^n_t$ from Lemma~\ref{lem:L2Density} for each $n \geq 1$.
We could have avoided considering such a compact exhaustion by formulating Lemma~\ref{lem:L2Density} without a pre-compactness assumption on $U$
(however, at least without extra assumptions, the proof of such a formulation itself would seem to require a compact exhaustion).
\end{remark}

\appendix

\section{Proof of Lemma~\ref{lem:L2Density}} \label{appendix:proof}

We follow the notation from Section~\ref{subsec:semiDir}. For $f \in C^\infty_c(U)$ denote
\[
\norm{Wf}^2_2 := \sum_{i=1}^d\norm{ W_i f}_2^2\;,
\]
and for $\alpha > 0$
\[
\EE_\alpha(f,f) := \EE(f,f) + \alpha\norm{f}_2^2\;.
\]

\begin{lemma}
\begin{enumerate}
\item For every $\varepsilon < \Lambda^{-1}$, there exists $\alpha > 0$, depending only on $\varepsilon$, $\Lambda$, $\norm{a}_\infty$ and $\sum_{i=1}^d\norm{\diver_\mu W_i}_\infty$, such that for all $f,g \in C^\infty_c(U)$
\begin{equation}\label{eq:lowerBounded}
\EE_\alpha(f,f) \geq \varepsilon \norm{W f}_2^2\;.
\end{equation}

\item There exist $\beta > 0$, depending only on $\norm{a}_\infty$ and $\sum_{i=1}^d\norm{\diver_\mu W_i}_\infty$, such that
\begin{equation}\label{eq:equivNorms}
|\EE(f,g)| \leq \beta \norm{Wf}_2\left(\norm{Wg}_2 + \norm{g}_2\right)\;.
\end{equation}
\end{enumerate} 
\end{lemma}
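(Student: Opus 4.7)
The plan is to split $\EE$ into its principal (second-order) part and a lower-order remainder by substituting $W_j^* g = -W_j g - (\diver_\mu W_j)g$ into the defining integral. This yields the decomposition $\EE(f,g) = I_1(f,g) + I_2(f,g)$, where
\begin{equation*}
I_1(f,g) = \sum_{i,j=1}^d \int_U a^{i,j} (W_i f)(W_j g)\, d\mu, \qquad I_2(f,g) = \sum_{i,j=1}^d \int_U a^{i,j} (\diver_\mu W_j)(W_i f) g\, d\mu.
\end{equation*}
The term $I_1$ is the principal symbol, a quadratic form in the derivatives; $I_2$ is a lower-order zeroth-order-in-$g$ remainder.

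For~\eqref{eq:lowerBounded}, set $g=f$. The pointwise ellipticity hypothesis~\eqref{eq:aLower}, applied to the vector $\xi(z) = (W_i f(z))_{i=1}^d \in \R^d$ (only the symmetric part of $a(z)$ contributes to the quadratic form), gives $I_1(f,f) \geq \Lambda^{-1} \norm{Wf}_2^2$. For $I_2(f,f)$, pull $a^{i,j}$ and $\diver_\mu W_j$ out in sup norm and apply Cauchy--Schwarz to obtain $|I_2(f,f)| \leq M \norm{Wf}_2 \norm{f}_2$, where $M$ depends only on $\norm{a}_\infty$ and $\sum_i \norm{\diver_\mu W_i}_\infty$ (an intermediate $\sqrt{d}$ appears but is harmless). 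A Young's inequality with split parameter $\delta := 2(\Lambda^{-1}-\varepsilon) > 0$,
\begin{equation*}
M \norm{Wf}_2 \norm{f}_2 \leq \frac{\delta}{2} \norm{Wf}_2^2 + \frac{M^2}{2\delta} \norm{f}_2^2,
\end{equation*}
absorbs the gradient term into $I_1(f,f)$ and produces the required inequality with $\alpha := M^2/(4(\Lambda^{-1}-\varepsilon))$. Note that the hypothesis $\varepsilon < \Lambda^{-1}$ is used precisely to make $\delta$ positive so that absorption is possible.

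For~\eqref{eq:equivNorms}, one applies Cauchy--Schwarz directly, without absorption: $|I_1(f,g)| \leq \norm{a}_\infty \norm{Wf}_2 \norm{Wg}_2$ and $|I_2(f,g)| \leq M \norm{Wf}_2 \norm{g}_2$ with the same $M$ as above. Summing gives the claim with $\beta := \norm{a}_\infty + M$.

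There is no genuine obstacle; both parts reduce to elementary manipulations with Cauchy--Schwarz and Young's inequality once the decomposition $\EE = I_1 + I_2$ is in place. The only point requiring mild care is the bookkeeping of constants, to verify that they depend solely on $\varepsilon$, $\Lambda$, $\norm{a}_\infty$ and $\sum_i \norm{\diver_\mu W_i}_\infty$ (and in particular not on $U$, $\mu$, or the choice of test functions), which is automatic from the sup-norm-times-$L^2$ structure of every estimate used.
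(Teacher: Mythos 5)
Your proof is correct and follows essentially the same route as the paper: expand $W_j^*$ to split $\EE$ into the principal quadratic term plus the divergence remainder, use the ellipticity bound~\eqref{eq:aLower} and Cauchy--Schwarz, and absorb the cross term via Young's inequality for part (1), with plain Cauchy--Schwarz for part (2). The constant bookkeeping checks out.
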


\begin{proof}
By the Cauchy-Schwartz inequality and~\eqref{eq:aLower}, for some $C_1, \alpha > 0$
\begin{align*}
\EE(f,f) &= \sum_{i,j=1}^d \int_{U}a^{i,j}(z)W_i f(z) W_j f(z)\mu(dz) + \sum_{i,j=1}^d \int_U a^{i,j}(z)W_i f(z) \diver_\mu W_j(z) f(z)\mu(dz) \\
&\geq \sum_{i=1}^d \int_{U} \Lambda^{-1} |W_i f(z)|^2 \mu(dz) - \sum_{i,j=1}^d\norm{\diver_\mu W_j}_\infty\norm{a^{i,j}}_\infty \norm{W_i f}_2\norm{f}_2 \\
&\geq \Lambda^{-1}\norm{W f}_2^2  - C_1\norm{W f}_2\norm{f}_2 \\
&\geq \varepsilon\norm{Wf}_2^2 - \alpha\norm{f}_2^2\;,
\end{align*}
which implies~\eqref{eq:lowerBounded}. On the other hand, by Cauchy-Schwartz, for some $C_2,C_3 > 0$
\[
\Big|\sum_{i,j=1}^d \int_U a^{i,j}(z)W_i f(z) W_j g(z)dz \Big| \leq C_2 \norm{Wf}_2\norm{Wg}_2\;,
\]
and
\[
\Big|\sum_{i,j=1}^d\int_U a^{i,j}(z) W_i f(z) \diver_\mu W_j(z) g(z)dz \Big| \leq C_3\norm{Wf}_2 \norm{g}_2\;,
\]
from which we obtain~\eqref{eq:equivNorms}.
\end{proof}

Since $W$ satisfies H{\"o}rmander's condition on $\OO$, recall that for every $x \in \OO$ there exist constants $\nu_x > 2$, $C_x > 0$, and a neighbourhood $U_x$ of $x$ with $\mu(U_x) < \infty$ such that for all $f \in C^\infty_c(U_x)$ (see, e.g.,~\cite[p.~296]{Sturm95})
\[
\Big(\int_{U_x} |f|^{2\nu_x/(\nu_x-2)} d\mu\Big)^{(\nu_x-2)/\nu_x} \leq C_x \int_{U_x} \Big(\sum_{i=1}^d |W_i f|^2 + |f|^2 \Big) d\mu\;.
\]
Since $U$ is pre-compact, it is routine to patch together such inequalities using a partition of unity and apply interpolation to arrive at the following Sobolev inequality.

\begin{lemma}[Sobolev inequality]\label{lem:Sobolev}
There exist constants $\nu > 2$ and $C, R > 0$ such that for all $f\in C^\infty_c(U)$ with $\norm{f}_1 \leq 1$ and $\norm{f}_2 > R$
\[
\norm{f}_{2\nu/(\nu-2)}^{2} \leq C \norm{Wf}_2^2\;.
\]
\end{lemma}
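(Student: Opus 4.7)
The plan is to patch the local sub-Riemannian Sobolev inequalities via a partition of unity on the pre-compact set $\overline U$, and then use an interpolation argument to absorb the $\norm{f}_2^2$ term that a direct patching would leave on the right-hand side.

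First, I would exploit the compactness of $\overline U$ to extract a finite sub-cover $U_{x_1},\ldots,U_{x_k}$ on which the local Sobolev inequalities hold with exponents $\nu_{x_i}>2$ and constants $C_{x_i}>0$. Set $\nu := \min_{i} \nu_{x_i}$ and $q := 2\nu/(\nu-2)$; then $q \leq q_i := 2\nu_{x_i}/(\nu_{x_i}-2)$ since $s\mapsto 2s/(s-2)$ is decreasing on $(2,\infty)$. Fix a smooth partition of unity $\{\chi_i\}$ on $\overline U$ subordinate to $\{U_{x_i}\}$, so each $\chi_i f \in C^\infty_c(U_{x_i})$ for $f \in C^\infty_c(U)$. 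Since $\mu(U_{x_i}) < \infty$, H\"older's inequality gives $\norm{\chi_i f}_q \leq \mu(U_{x_i})^{1/q - 1/q_i}\norm{\chi_i f}_{q_i}$. Combining this with the local Sobolev inequality in $U_{x_i}$, the Leibniz rule $W_j(\chi_i f) = (W_j\chi_i)f + \chi_i W_j f$, and the finiteness of $\norm{W\chi_i}_\infty$ and $\norm{\chi_i}_\infty$, summation over $i$ together with the triangle inequality in $L^q$ would produce a constant $C_1 > 0$ such that for every $f\in C^\infty_c(U)$,
\[
\norm{f}_q^2 \leq C_1\bigl(\norm{Wf}_2^2 + \norm{f}_2^2\bigr).
\]

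Next I would interpolate to absorb the $\norm{f}_2^2$ term. Let $\theta := q/(2(q-1)) \in (0,1)$, the H\"older exponent characterized by $\tfrac{1}{2} = (1-\theta) + \tfrac{\theta}{q}$; then $\norm{f}_2 \leq \norm{f}_1^{1-\theta}\norm{f}_q^\theta$, and the hypothesis $\norm{f}_1 \leq 1$ gives $\norm{f}_2^2 \leq \norm{f}_q^{2\theta}$ with $2\theta < 2$. Plugging into the previous display yields $\norm{f}_q^2 \leq C_1\norm{Wf}_2^2 + C_1\norm{f}_q^{2\theta}$. The second hypothesis $\norm{f}_2 > R$, combined with $\norm{f}_2 \leq \norm{f}_q^\theta$, forces $\norm{f}_q \geq R^{1/\theta}$, so choosing $R$ large enough that $R^{2/\theta} \geq (2C_1)^{1/(1-\theta)}$ makes $C_1\norm{f}_q^{2\theta} \leq \tfrac12 \norm{f}_q^2$. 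Absorbing this term yields $\norm{f}_q^2 \leq 2C_1\norm{Wf}_2^2$, which is the claim with $C = 2C_1$.

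The main (mild) obstacle is just the bookkeeping: ensuring that the exponent $\nu$ can be chosen uniformly across the cover (by taking the minimum), and that the Leibniz cross-terms contribute only lower-order factors controlled by $\norm{f}_2$ and $\norm{Wf}_2$. There is no genuine analytic difficulty here---the whole argument is a standard Nash-type absorbing trick grafted onto a partition-of-unity localisation---and indeed the hint in the statement of the lemma explicitly suggests exactly this route.
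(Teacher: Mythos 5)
Your route is exactly the one the paper intends (the paper only sketches it: ``patch together such inequalities using a partition of unity and apply interpolation''), and your absorption step using $\norm{f}_1\leq 1$, $\norm{f}_2>R$ and the interpolation exponent $\theta=q/(2(q-1))$ is correct. There is, however, one concrete slip in the patching step: you set $\nu:=\min_i\nu_{x_i}$ and claim $q\leq q_i$ ``since $s\mapsto 2s/(s-2)$ is decreasing''. Precisely because that map is decreasing, $\nu\leq\nu_{x_i}$ gives $q=2\nu/(\nu-2)\geq 2\nu_{x_i}/(\nu_{x_i}-2)=q_i$, i.e.\ the opposite inequality, and then H{\"o}lder on the finite-measure sets $U_{x_i}$ goes the wrong way (it lets you pass from a larger exponent down to a smaller one, not up). The fix is simply to take $\nu:=\max_i\nu_{x_i}$, which yields the \emph{smallest} target exponent $q\leq q_i$ for all $i$, so that $\norm{\chi_i f}_q\leq\mu(U_{x_i})^{1/q-1/q_i}\norm{\chi_i f}_{q_i}$ holds as you use it; any $\nu>2$ is acceptable for the subsequent Nash argument, so nothing downstream is affected. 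With that correction the rest of your argument (Leibniz cross-terms controlled by $\norm{f}_2$, summation over the finite cover, and the absorption of $C_1\norm{f}_q^{2\theta}$ for $R$ large) goes through.
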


Fix $\varepsilon < \Lambda^{-1}$ and $\alpha > 0$ such that~\eqref{eq:lowerBounded} holds. Let $\FF$ be the closure of $C^\infty_c(U)$ under $\norm{\cdot}_\FF := \EE_\alpha(\cdot,\cdot)^{1/2}$.

\begin{corollary}[Nash inequality]\label{cor:EELowerBound}
Let $\nu > 2$ and $R>0$ be the same as in Lemma~\ref{lem:Sobolev}. There exists $c >0$ such that for all $f \in \FF$ with $\norm{f}_1 \leq 1$ and $\norm{f}_2 > R$, it holds that
\begin{equation}\label{eq:Nash}
\EE(f,f) \geq c\norm{f}^{2+4/\nu}_2\;.
\end{equation}
\end{corollary}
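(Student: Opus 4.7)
The plan is a standard Nash-type interpolation, exploiting that the lower bound \eqref{eq:lowerBounded} converts $\EE(f,f)$ into an $L^2$-norm of $Wf$ (up to a subtractible $\alpha\|f\|_2^2$ term), while the Sobolev inequality of Lemma~\ref{lem:Sobolev} upgrades $\|Wf\|_2$ to a higher $L^p$ norm that interpolates favourably between $L^1$ and $L^2$.

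Concretely, I would first apply Hölder interpolation between $L^1$ and $L^{p}$ with $p = 2\nu/(\nu-2)$: a direct computation gives
\[
\norm{f}_2^2 \leq \norm{f}_1^{2\theta}\norm{f}_p^{2(1-\theta)}, \qquad \theta = \tfrac{2}{\nu+2}.
\]
Using the hypothesis $\norm{f}_1 \leq 1$, this rearranges to $\norm{f}_p^2 \geq \norm{f}_2^{2+4/\nu}$. Combining with Lemma~\ref{lem:Sobolev} yields
\[
\norm{Wf}_2^2 \geq C^{-1}\norm{f}_2^{2+4/\nu}
\]
for $f \in C^\infty_c(U)$ satisfying the hypotheses of the corollary. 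Plugging this into \eqref{eq:lowerBounded} gives
\[
\EE(f,f) \geq \varepsilon C^{-1}\norm{f}_2^{2+4/\nu} - \alpha\norm{f}_2^2 = \norm{f}_2^{2+4/\nu}\bigl(\varepsilon C^{-1} - \alpha\norm{f}_2^{-4/\nu}\bigr).
\]

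Since $\nu > 2$, the power $4/\nu$ is strictly positive, so the lower-order error $\alpha\norm{f}_2^2$ is dominated by the leading term once $\norm{f}_2$ is sufficiently large. I would handle this by enlarging $R$ at the outset: a larger $R$ only weakens the hypothesis of Lemma~\ref{lem:Sobolev}, so we may assume from the start that $R$ is chosen so that $\alpha R^{-4/\nu} \leq \varepsilon/(2C)$. Then for $\norm{f}_2 > R$ the parenthesised factor is at least $\varepsilon/(2C)$, giving \eqref{eq:Nash} with $c = \varepsilon/(2C)$.

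Finally, to extend the inequality from $C^\infty_c(U)$ to $\FF$, I would use that any $f \in \FF$ is by definition a $\norm{\cdot}_\FF$-limit of a sequence $(f_n) \subset C^\infty_c(U)$. Since $\norm{\cdot}_\FF$ dominates $\norm{\cdot}_2$ (by definition of $\EE_\alpha$) and continuity of $\EE$ on $\FF$ is immediate from \eqref{eq:equivNorms} together with \eqref{eq:lowerBounded}, the bound \eqref{eq:Nash} passes to the limit. There is no real obstacle here; the only point requiring attention is the absorption of the $\alpha\norm{f}_2^2$ error, which, as noted, is free once $R$ is taken large enough from the Sobolev step.
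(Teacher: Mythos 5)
Your proof is correct and follows essentially the same route as the paper's: Hölder interpolation between $L^1$ and $L^{2\nu/(\nu-2)}$ combined with the Sobolev inequality gives $\norm{f}_2^{2+4/\nu}\leq C\norm{f}_1^{4/\nu}\norm{Wf}_2^2$, after which \eqref{eq:lowerBounded} and an approximation argument finish the job. You are in fact slightly more explicit than the paper about absorbing the $\alpha\norm{f}_2^2$ term by enlarging $R$ if necessary, which is a legitimate and welcome clarification rather than a deviation.
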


\begin{proof}
Consider first $f \in C^\infty_c(U)$. The Sobolev inequality (Lemma~\ref{lem:Sobolev}), along with H{\"o}lder's inequality, implies that
\[
\norm{f}_2^{2+4/\nu} \leq C\norm{f}_1^{4/\nu}\norm{W f}^2_2\;,
\]
from which the conclusion follows first for all $f \in C^\infty_c(U)$ by~\eqref{eq:lowerBounded}, and then for general $f \in \FF$ by an approximation argument.
\end{proof}

\begin{proof}[Proof of Lemma~\ref{lem:L2Density}]
The desired properties of $\EE$ all follow from~\eqref{eq:lowerBounded} and~\eqref{eq:equivNorms} and the fact that each $W_i$ is a closable operator defined on $C^\infty_c(U) \subset L^2(U,\mu)$.

Denote by $A$ the generator of the associated adjoint semi-group $P_t^*$ in $L^2(U,\mu)$ with domain $D(A)$.
Consider $f \in D(A)$ with $\norm{f}_1 \leq 1$ and set $u_t = P_t^* f$. Since $P_t$ is sub-Markov, we have $\norm{u_t}_1 \leq 1$, so by Corollary~\ref{cor:EELowerBound}, whenever $\norm{u_t}_2 > R$,
\[
\frac{d}{dt}\norm{u_t}_2^2 = \lim_{h \rightarrow 0} \frac{\norm{P_h^* u_t}^2_2-\norm{u_t}^2_2}{h} = -2\EE(u_t,u_t) \leq -2c\norm{u_t}_2^{2+4/\nu}\;,
\]
from which it follows that there exists $b>0$ such that $\norm{P_t^* f}_2 \leq b t^{-\nu/2}$.

To complete the proof, it remains only to apply an approximation of the Dirac delta $\gen{\phi,f_n} \rightarrow \gen{\phi,\delta_x} = \phi(x)$ for all $\phi \in C_b(U)$, with $f_n \in D(A)$ and $\norm{f_1} \leq 1$, and use the fact that $\sup_n \norm{P_t^* f_n}_2 \leq b t^{-\nu/2}$ and that $P_t$ preserves $C_b(U)$.
\end{proof}

\bibliographystyle{./Martin}
\bibliography{./AllRefs}

\end{document}